\def\R{{\mathbb R}}
\def\N{{\mathbb N}}
\def\virgp{\raise 2pt\hbox{,}}
\def\({\left(}
\def\){\right)}
\def\<{\left\langle}
\def\>{\right\rangle}
\def\le{\leqslant}
\def\ge{\geqslant}
\def\d{{\partial}}
\def\eps{\varepsilon}
\def\l{\lambda}
\def\om{\omega}
\newcommand{\de}{\mathrm{d}}
\newcommand{\re}{\mathrm{Re}}
\newcommand{\im}{\mathrm{Im}}
\newcommand{\mez}{\frac{1}{2}}
\newcommand{\1}[1]{\frac{1}{#1}}
\renewcommand{\d}{\partial}
\newcommand{\conj}[1]{\overline{#1}}
\newcommand{\modu}[1]{\left|#1\right|}
\newcommand{\pare}[1]{\left(#1\right)}
\DeclareMathOperator{\diver}{div}
\newcommand{\D}{d}
\theoremstyle{plain}
\newtheorem{theorem}{Theorem}[section]
\newtheorem{lemma}[theorem]{Lemma}
\newtheorem{corollary}[theorem]{Corollary}
\newtheorem{proposition}[theorem]{Proposition}
\theoremstyle{definition}
\newtheorem{definition}[theorem]{Definition}
\newtheorem{remark}[theorem]{Remark}
\newtheorem*{remark*}{Remark}
\numberwithin{equation}{section}
\begin{document}

\title[Cubic NLS with nonlinear dissipation]
{Global well-posedness for cubic NLS with nonlinear damping}
\author[P. Antonelli]{Paolo Antonelli}
\address[P. Antonelli]{Department of Applied Mathematics and
Theoretical Physics\\
CMS, Wilberforce Road\\ Cambridge CB3 0WA\\ England}
\email{p.antonelli@damtp.cam.ac.uk}
\author[C. Sparber]{Christof Sparber}
\address[C. Sparber]{Department of Applied Mathematics and Theoretical
Physics\\
CMS, Wilberforce Road\\ Cambridge CB3 0WA\\ England}
\email{c.sparber@damtp.cam.ac.uk}
\begin{abstract}
We study the Cauchy problem for the cubic nonlinear Schr\"odinger equation, perturbed by (higher order) dissipative nonlinearities.
We prove global in-time existence of solutions for general initial data in the energy space. In particular we treat the 
energy-critical case of a quintic dissipation in three space dimensions.
\end{abstract}

\date{\today}

\subjclass[2000]{35Q55, 35A05, 37L05}
\keywords{Nonlinear Schr\"odinger equation, nonlinear damping, energy space, dissipation, three-body recombination}

\thanks{This publication is based on work supported by Award No. KUK-I1-007-43, funded by the King Abdullah University of Science and Technology (KAUST). The authors
thank the Institute for Pure and Applied Mathematics (Los Angeles) for its hospitality and financial support}
\maketitle

\section{Introduction}
\label{sec:intro}

In this paper we study the Cauchy problem of the cubic nonlinear Schr\"odinger equation (NLS) with nonlinear damping, i.e.
\begin{equation}\label{eq:diss_NLS}
\left \{
\begin{split}
i\d_t u  + \mez\Delta u = & \, V(x) u+  \lambda|u |^2 u -i\sigma| u |^{p-1}u , \quad (t,x) \in [0, \infty)\times \R^d,\\
u(0) = & \, u_0(x),
\end{split}
\right.
\end{equation}
with given parameters $\lambda \in \R$ and $\sigma \ge 0$, the latter describing the strength of the dissipation within our model.
We shall consider the physically relevant situation of
$d\le 3$ spatial dimensions and assume that the dissipative nonlinearity is at least of the same order as the cubic one, i.e. $p\ge 3$.
However, in dimension $d= 3$, we shall restrict ourselves to $3 \le p \le 5$.
In other words, we assume that the dissipative effects
in our model can be described by nonlinearities, which are at most energy critical in the sense of \cite{Caz, T}.
In the following, we shall also assume the external potential $V$ to be an anisotropic quadratic confinement, i.e.
\begin{equation}\label{eq:pot}
V(x)= \frac{1}{2}\sum_{j=1}^d \om_j^2x_j^2, \quad \omega_j \in \R.
\end{equation}
In the case without any dissipation, i.e. $\sigma = 0$, equation \eqref{eq:diss_NLS} simplifies to the classical cubic NLS, a canonical description for
weakly nonlinear wave propagation in dispersive media \cite{SuSu}. It arises in various areas of physics, such as nonlinear optics, plasma physics
or ultracold quantum gases (Bose-Einstein condensates), cf. \cite{SuSu} for a broader introduction. 
Due to the inclusion of a quadratic potential, the natural energy space when studying the Cauchy problem for this equation is given by
\begin{equation*}
\Sigma:= \big \{ u \in H^1(\R^d) \ : \ x u \in L^2(\R^d) \big \},
\end{equation*}
and we consequently denote
\begin{equation*}
\| u \|_{\Sigma}:=\| u \|_{L^2}+\|\nabla u \|_{L^2} + \| x u \|_{L^2}.
\end{equation*}
In the case without harmonic confinement, the results on local and global well-posedness properties of (purely dispersive) cubic NLS are by now considered to be classical, see e.g. \cite{Caz, T}
and the references given therein. In particular, it is very well known that in the \emph{focusing case} $\lambda < 0$ 
finite-time blow-up of solutions can occur if $d\ge 2$, i.e.
$$\lim_{t \to T} \| \nabla u (t, \cdot) \|_{L^2(\R^d)} = \infty,$$ where the blow-up time 
$T < \infty$ depends on the initial data $u_0 $. Generalization to the case with harmonic potentials have been studied in \cite{Fu, Oh, Zh} and more recently in 
\cite{Car1, Car3, Car, Carles}. For confining potentials, the well-posedness results are found to be very much the same as in the classical situation (see also the discussion in Remark \ref{rem: blow-up} below).

From the point of view of physics, the occurrence of blow-up usually implies that new effects have to be taken into account in order to extend the model beyond 
the appearing singularities. As a specific
example, let us briefly describe such a situation in the context of \emph{collapsing Bose-Einstein condensates}: 
These are ultracold quantum gases, which (in a mean-field approximation) can be described
via the Gross-Pitaevskii equation, a cubic NLS governing the macroscopic wave function of the condensate.
In addition, one usually takes into account a harmonic confinement $V$, modeling
the electromagnetic trap present in actual physical experiments. It is nowadays possible to study 
the collapse of the condensate experimentally by tuning the effective nonlinear interaction from positive to negative. 
What one observes is, that as the particle density
increases around the blow-up point, atoms are suddenly emitted from the condensate in bursts, so-called \emph{jets}. 
These jets are caused by (inelastic) \emph{three-body interaction} or recombination effects taking place only at high densities \cite{A}.
Mathematically, three-body forces can be effectively described by quintic nonlinearities, cf. \cite{CP}. 
In order to describe the emittance of particles from the condensate within the realm of Gross-Pitaevskii theory one usually employs the following dissipative model (see \cite{A, KMS, SU} and the references given therein):
\begin{equation}\label{eq:GP}
i \hbar \d_t u + \frac{\hbar^2}{2m} \Delta u = V(x) u + g  |u |^2 u - i  \sigma_3   | u |^{4}u .
\end{equation}
Here $g = 4\pi N \hbar^2  a /m $, with $m$ denoting the mass of the particles and $a$ their scattering length. The 
parameter $\sigma_3$ denotes the three particle recombination loss-rate. Rescaling \eqref{eq:GP} into dimensionless form 
yields \eqref{eq:diss_NLS} with $p=5$ and $d=3$. Note that in \eqref{eq:GP} the total mass, i.e. $M(t):= \| u(t, \cdot) \|^2_{L^2}$, is no longer conserved, 
as can easily be seen from the dissipation equation for the particle density $\rho= |u|^2$:
\begin{equation}\label{eq:diss_rho}
\d_t\rho+\diver J=-2\sigma\rho^{3},
\end{equation}
where $J:=\im(\conj{u}\nabla u)$ denotes the current density. Numerical simulations show \cite{BJM} (see also \cite{BJ}),
that the solution to \eqref{eq:GP} undergoes a series of events where concentration is
followed by consequent dissipation, lowering the density and hence preventing blow-up. We consequently expect that equation \eqref{eq:GP} and more generally \eqref{eq:diss_NLS},
admits global in-time solutions (even for large initial $u_0 \in \Sigma$) and it is the main purpose of our work to prove
that this is indeed the case. Obviously, we are mainly interested in the focusing case $\lambda < 0$,
but we shall keep $\lambda \in \R$ for the sake of
generality. Let us also remark that, even though equation \eqref{eq:GP} has been the main motivation for the 
present work, similar NLS type models with nonlinear damping terms also appear in other areas of 
physics, see e.g. \cite{Bi, PSS, PeSt, SaMa}. For example, in the context of nonlinear optics, nonlinear damping terms are used to describe 
multi-photon absorption, cf. \cite{Fi}.

From a mathematical point of view, we are facing two basic problems in setting up a global existence theory for \eqref{eq:diss_NLS}: 
First, in the physically most relevant
situation where $p=5$ and $d=3$, the (quintic) dissipative nonlinearity is known to be energy-critical \cite{Caz, T} and thus cannot
be considered as a small perturbation of the Laplacian any more. Indeed, global well-posedness for defocusing energy critical NLS
in $d=3$ dimensions (without external potentials) has only been proved recently in the seminal work of Colliander et al. \cite{CKSTT} (see also \cite{KiVi}). 
Even though our nonlinearity is dissipative, this property is not seen locally in time (e.g. in terms of Strichartz-estimates) and we are therefore 
in a similar situation as for the Hamiltonian case. 
A second obstruction for a global existence-theory, even for energy sub-critical cases, is the lack of conserved quantities, 
in contrast to the usual case of Hamiltonian NLS. This is also the main difference to the work of Tao et al. \cite{TVZ}, which treats purely dispersive 
NLS with combined power type nonlinearities. Also there, blow-up is prevented by sufficiently strong (i.e. higher order) defocusing nonlinearities. Note however, 
that such a (mass conservative) model would not be able to describe the coherent loss of particles found in collapsing Bose-Einstein condensates.

Before going further, let us briefly compare our situation to the mathematically much better studied case of \emph{linearly damped} NLS, i.e. 
\begin{equation}\label{eq:lin_NLS}
i\d_t u + \mez\Delta u  = V(x) u + \lambda|u |^2 u - i\sigma u.
\end{equation}
There exists a considerable amount of results in the physics and
mathematics literature for such (weakly damped) NLS type equations. In particular the Cauchy problem to \eqref{eq:lin_NLS} has been analyzed in \cite{OT, Ts1} and more detailed
properties concerning the long-time behavior of solutions can be found in e.g. \cite{Fi, Go, La}. The main difference between our case and \eqref{eq:lin_NLS} is,
that the latter can be treated by the phase-transformation $u(t,x) \to e^{\sigma t} u(t,x)$, which makes the damping term vanish.

In contrast to the situation with linear damping,
the literature on NLS with nonlinear damping is not so abundant: In \cite{KiShi, Shi} 
the asymptotic behavior of \emph{small} solutions to NLS with dissipative nonlinearities 
of the form $\lambda |u|^{p-1}u$, with $\im \, {\lambda \ge 0}$ and $1<p\le 1+2/d$, is studied.
For higher order dissipative nonlinearities, the only rigorous results, we are aware of, 
are given in \cite{PSS}, where, for $p=2(1+s)$ with $s>0$, the authors prove a non-uniform (in-time) bound on the $H^1$
norm of the solution. Numerical studies of NLS type models with nonlinear damping can be found in \cite{BJ, BJM} and we also mention 
the results of \cite{Fi}, based on modulation theory. 

In the following section we shall present our main theorem and deduce from it several corollaries. 
The corresponding proofs are then given in Section \ref{sec:critical}  (where the energy critical case is treated)
and Section \ref{sec:sub}.

\section{Main results} \label{sec:results}

Our first main result concerns the case of an energy-critical damping term.

\begin{theorem}\label{th:critical}
Let $d=3$ and $p = 5$. Assume $V$ to be a quadratic confinement of the type \eqref{eq:pot} and $u_0 \in \Sigma$. Then, for any $\lambda \in \R$ and for any $\sigma >0$ the equation \eqref{eq:diss_NLS} has a unique
global in-time solution $u \in C([0, \infty), \Sigma)$, such that
$$
\int_0^\infty \int_{\R^3}|u(t,x)|^{10} d x \, d t \le C( \|u_0\|_{\Sigma}).
$$
\end{theorem}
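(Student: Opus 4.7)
My plan is to combine local well-posedness in $\Sigma$ via Strichartz estimates for the Schr\"odinger propagator $e^{it(\mez\Delta-V)}$ with two global a priori bounds --- one from the mass-dissipation identity and one from an energy identity --- and then to close a Strichartz bootstrap on the critical $L^{10}_{t,x}$ norm via a partition argument. Local well-posedness in $C([0,T];\Sigma)\cap L^{10}([0,T]\times\R^3)$ follows from a Picard iteration using Strichartz estimates adapted to the harmonic oscillator (valid on bounded intervals via the Mehler kernel): the cubic term is $H^1$-subcritical and the energy-critical quintic dissipation is absorbed by requiring $\|u\|_{L^{10}_{t,x}}$ to be small on the local interval. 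This yields a maximal solution on $[0,T^*)$ and the standard blow-up alternative: either $T^*=+\infty$ or $\|u\|_{L^{10}_{t,x}([0,T^*)\times\R^3)}=+\infty$.

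Testing \eqref{eq:diss_NLS} against $\bar u$ and taking imaginary parts gives the mass identity $\tfrac{d}{dt}\|u\|_{L^2}^2=-2\sigma\|u\|_{L^6}^6$, and hence the global space-time estimate
\begin{equation*}
 2\sigma\int_0^{T^*}\!\!\int_{\R^3}|u(s,x)|^6\,dx\,ds\;\le\;\|u_0\|_{L^2}^2.
\end{equation*}
This is the key source of smallness up to the maximal time of existence. Next, differentiating the natural energy $E(u)=\tfrac14\|\nabla u\|_{L^2}^2+\tfrac12\int V|u|^2+\tfrac{\lambda}{4}\|u\|_{L^4}^4$ along the flow gives
\begin{equation*}
 \dot E=-2\sigma\int|u|^2|\re(\bar u\nabla u)|^2-\tfrac{\sigma}{2}\int|u|^4|\nabla u|^2-\sigma\int V|u|^6-\sigma\lambda\int|u|^8,
\end{equation*}
which is manifestly nonpositive in the defocusing case $\lambda\ge 0$ and directly furnishes the required bounds on $\|\nabla u\|_{L^2}$ and, via the potential term, on $\|xu\|_{L^2}$.

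The main obstacle is the focusing case $\lambda<0$, where the last term becomes $+\sigma|\lambda|\int|u|^8\ge 0$ and the energy can a priori grow. To handle it I would observe that the first two terms of $\dot E$ combine into $-\tfrac{\sigma}{4}\|\nabla(|u|^2u)\|_{L^2}^2-\tfrac{\sigma}{4}\int|u|^4|\nabla u|^2$, and apply the Gagliardo--Nirenberg inequality to $v=|u|^2u$ in $\R^3$ to get $\int|u|^8\lesssim\|\nabla v\|_{L^2}\,\|u\|_{L^6}^{5}$. Young's inequality then absorbs the bad term into the good dissipative one at the cost of $C\sigma|\lambda|^2\|u\|_{L^6}^{10}$; a Gronwall argument leveraging the global $L^6_{t,x}$-bound above together with the Sobolev embedding $\Sigma\hookrightarrow L^6(\R^3)$ yields a uniform bound on $E$, and hence --- since the mass is non-increasing and the focusing $L^4$ term is controlled through Gagliardo--Nirenberg and the mass --- on $\|u\|_\Sigma$ throughout $[0,T^*)$. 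Control of $\|xu\|_{L^2}$ is obtained analogously by differentiating $\int|xu|^2\,dx$ and absorbing would-be growth terms into the same dissipative structure.

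Once the uniform $\Sigma$-bound is in hand, the $L^{10}_{t,x}$-bound is closed by partitioning $[0,T^*)$ into at most $N=\|u_0\|_{L^2}^2/(2\sigma\eta)$ sub-intervals $I_j$ on each of which $\|u\|_{L^6(I_j\times\R^3)}^6\le\eta$, and running on each $I_j$ a standard Strichartz bootstrap for $\|u\|_{L^{10}_{t,x}(I_j)}+\|\nabla u\|_{L^{10}_tL^{30/13}_x(I_j)}$: the cubic nonlinearity is subcritical and the quintic one is handled by H\"older using the $L^6_{t,x}$-smallness together with the $\Sigma$-bound. Summing tenth powers over the finitely many sub-intervals yields $\|u\|_{L^{10}([0,T^*)\times\R^3)}^{10}\le NC(\|u_0\|_\Sigma)$, which via the blow-up alternative forces $T^*=+\infty$ and produces the asserted bound.
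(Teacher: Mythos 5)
Your overall architecture (mass dissipation $\Rightarrow$ global $L^6_{t,x}$ bound, energy monotonicity, then a partitioned Strichartz bootstrap) is the right shape, and your algebraic identities for $\frac{d}{dt}\|u\|_{L^2}^2$ and $\dot E$ are correct. But the two steps that actually close the argument both fail. First, in the focusing case your Gronwall is superlinear and does not close for large data. After Young's inequality you are left with $C\sigma|\lambda|^2\|u\|_{L^6}^{10}$, and since in $\R^3$ the only Gagliardo--Nirenberg control of $L^6$ by $H^1$ is the critical Sobolev embedding $\|u\|_{L^6}\lesssim\|\nabla u\|_{L^2}$, you must write $\|u\|_{L^6}^{10}\le \|u\|_{L^6}^{6}\,\|\nabla u\|_{L^2}^{4}\lesssim (1+E)^2\|u\|_{L^6}^6$. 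The resulting inequality $\dot E\lesssim (1+E)^2 g(t)$ with $\int_0^\infty g\,dt\simeq \|u_0\|_{L^2}^2/\sigma$ only yields a global bound when $|\lambda|^2\|u_0\|_{L^2}^2$ is small; for general data the differential inequality permits finite-time blow-up of $E$. The paper's device is precisely designed to avoid this: it augments the energy by $\kappa\int|u|^6$ with $0<\kappa<\sigma/6$, whose time derivative contributes the strong dissipation $-6\sigma\kappa\int|u|^{10}$; the bad term $\int|u|^8$ is then interpolated as $\int\rho^4\le \frac{1}{2\eps}\int\rho^3+\frac{\eps}{2}\int\rho^5$, the $\rho^5$ piece is absorbed, and the leftover $\int\rho^3=\|u\|_{L^6}^6$ integrates in time with a constant \emph{independent of} $E$, so the Gronwall is trivial. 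As a free by-product this yields the global bound $\int_0^\infty\int|u|^{10}\,dx\,dt\le C(\|u_0\|_\Sigma)$ directly from the energy identity --- not from Strichartz.

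Second, your plan to extract the critical $L^{10}_{t,x}$ bound from a bootstrap driven only by $L^6_{t,x}$-smallness cannot work, because $\|u\|_{L^{10}_{t,x}}$ must be reached through $\|\nabla u\|_{L^{10}_tL^{30/13}_x}$ (Sobolev), and the quintic contribution to the gradient equation admits no H\"older splitting that gains from $L^6_{t,x}$: the only admissible dual estimate compatible with $H^1$-level spatial integrability is
\begin{equation*}
\big\| |u|^4\nabla u\big\|_{L^2_tL^{6/5}_x}\le \|u\|_{L^\infty_tL^6_x}^4\,\|\nabla u\|_{L^2_tL^6_x},
\end{equation*}
whose prefactor $\|u\|_{L^\infty_tL^6_x}^4\le C(\|u_0\|_\Sigma)^4$ is bounded but not small (any attempt to insert an $L^6_t$ factor in time forces the exponent of the $L^6_{t,x}$ norm to zero). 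This is exactly the energy-critical obstruction: no subcritical space-time bound produces smallness for the quintic term, and the local existence time depends on the profile of the data, not just on $\|u\|_\Sigma$, so your finite partition into $L^6$-small intervals does not let the iteration close. The paper resolves this by first proving the global $L^{10}_{t,x}$ bound via the modified energy (as above), then partitioning time into intervals on which $\|u\|_{L^{10}_{t,x}}$ itself is small; with that critical smallness the estimate $\||u|^4\nabla u\|_{L^{10/7}_{t,x}}\le\|u\|_{L^{10}_{t,x}}^4\|\nabla u\|_{L^{10/3}_{t,x}}$ does close the bootstrap. In short: you need the $\kappa|u|^6$ correction to the energy both to tame the focusing term and to generate the critical space-time bound; without it, neither of your closing steps goes through.
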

The theorem confirms the numerical results of \cite{BJM} on the time-evolution of attractive Bose-Einstein condensates with three-body recombination. No collapse, i.e. finite-time blow-up, occurs due to the dissipation. 
Note that Theorem \ref{th:critical} holds for any $\sigma>0$, no matter how small, which is consistent with the physics literature,
where, after a dimensionless rescaling, 
one finds $\sigma \ll 1$ for any realistic parameter regime \cite{A, BJM, KMS}.
The additional a-priori estimate on the $L^{10}_{t,x}$ norm of the solution (to be proved in Proposition \ref{prop:a-priori}) is reminiscent of the one used in \cite{CKSTT}.
This space-time bound is also required in our proof, since it is well known (see e.g. \cite{Caz}) that
the usual a-priori bound on the $H^1$ norm is not be sufficient to conclude global existence. The reason being that the local existence-time of solutions does not only depend 
on the $H^1$ norm of $u$, but also on its profile.

As a first consequence of the above given theorem, we obtain the analogous statement for all energy subcritical situations (in $d\le 3$)
where the nonlinear damping term is of higher order than cubic.

\begin{corollary}\label{cor:sub}
Let $V$ be a quadratic confinement and $u_0 \in \Sigma$. Moreover, let $p > 3$, if $d=1,2$ and $3< p < 5$, if $ d=3$. Then, for any $\lambda \in \R$ and any $\sigma >0$, the equation \eqref{eq:diss_NLS} has a unique
global in-time solution $u \in C([0, \infty), \Sigma)$.
\end{corollary}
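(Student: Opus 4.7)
The plan is to combine standard local well-posedness in $\Sigma$ with a priori bounds derived from a mass dissipation identity and an energy-type estimate. For the stated range of $p$, the cubic and damping nonlinearities are both $H^1$-subcritical, so classical Strichartz plus contraction mapping arguments (as in \cite{Carles} in the presence of a quadratic confinement) yield a unique maximal solution $u\in C([0,T_{\max}),\Sigma)$ together with the blow-up alternative $T_{\max}=\infty$ or $\lim_{t\to T_{\max}}\|u(t)\|_\Sigma=+\infty$. The task thus reduces to an a priori bound on $\|u(t)\|_\Sigma$ over $[0,T_{\max})$.

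Integrating \eqref{eq:diss_rho} gives $\frac{d}{dt}M(t)=-2\sigma\|u(t)\|_{L^{p+1}}^{p+1}$, hence $M(t)\le M(0)$ and, more importantly, the spacetime bound
\[
\int_0^{T_{\max}}\|u(t)\|_{L^{p+1}}^{p+1}\,dt\le\frac{\|u_0\|_{L^2}^2}{2\sigma}.
\]
This integrability, gained from the dissipative term, will play the role analogous to the $L^{10}_{t,x}$ bound of Theorem \ref{th:critical}. Multiplying \eqref{eq:diss_NLS} by $\partial_t\bar u$, taking the real part, and integrating by parts yields
\[
\frac{d}{dt}E(u)=-\sigma(p-1)\!\int\!|u|^{p-1}\big|\nabla|u|\big|^2-\sigma\!\int\!|u|^{p-1}|\nabla u|^2-2\sigma\!\int\!V|u|^{p+1}-2\sigma\lambda\!\int\!|u|^{p+3},
\]
where $E(u):=\tfrac{1}{2}\|\nabla u\|_{L^2}^2+\int V|u|^2+\tfrac{\lambda}{2}\|u\|_{L^4}^4$. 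All terms on the right are nonpositive, except the last one in the focusing case $\lambda<0$.

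In that case, I estimate $\int|u|^{p+3}$ by Gagliardo-Nirenberg interpolation, extracting a factor of $\|u\|_{L^{p+1}}^{p+1}$ (time-integrable by the above) times a suitable power of $\|\nabla u\|_{L^2}^2$. Combined with the GN bound $\|u\|_{L^4}^4\le C\|\nabla u\|_{L^2}^{d}\|u\|_{L^2}^{4-d}$ used to express $E$ in terms of $\|\nabla u\|_{L^2}^2$ up to controlled quantities, one obtains a Gronwall-type inequality $\|\nabla u(t)\|_{L^2}^2\le C_0+\int_0^t f(s)(\|\nabla u(s)\|_{L^2}^2+1)\,ds$ with $f\in L^1([0,T_{\max}))$, and hence a uniform gradient bound. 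The $\|xu(t)\|_{L^2}$ piece is then handled through the identity $\frac{d}{dt}\|xu\|_{L^2}^2=2\im\!\int\!\bar u\,x\!\cdot\!\nabla u-2\sigma\!\int\!|x|^2|u|^{p+1}$ and Cauchy-Schwarz, reducing to a linear-growth bound in time driven by $\|\nabla u(t)\|_{L^2}$. Together these yield $\sup_{[0,T_{\max})}\|u(t)\|_\Sigma<\infty$, forcing $T_{\max}=\infty$.

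The main obstacle is the Gagliardo-Nirenberg step for $\int|u|^{p+3}$ in dimension $d=3$: since $p+3>6$ whenever $p>3$, the standard interpolation between $L^2$ and $H^1(\R^3)$ cannot reach $L^{p+3}$, and a refined interpolation between the time-integrable $L^{p+1}$-norm and $\|\nabla u\|_{L^2}^2$ (potentially combined with a Strichartz bootstrap in the spirit of Theorem \ref{th:critical}) is required. In $d=1,2$, the Sobolev embeddings $H^1\hookrightarrow L^q$ for every finite $q$ make the analogous interpolation straightforward, and the argument proceeds cleanly.
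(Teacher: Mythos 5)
Your overall architecture (local well-posedness in $\Sigma$ with a blow-up alternative, the mass-dissipation identity giving $\int_0^\infty\|u\|_{L^{p+1}}^{p+1}\,dt\le C(\|u_0\|_{L^2})$, and an energy estimate whose only bad term is $-2\sigma\lambda\int|u|^{p+3}$ in the focusing case) matches the paper. But the step you yourself flag as the "main obstacle" is a genuine gap, and it occurs precisely in the case the corollary is about: for $d=3$ and $p>3$ one has $p+3>6$, so $L^{p+3}$ lies strictly above everything $H^1(\R^3)$ controls, and no Gagliardo--Nirenberg interpolation between the time-integrable $L^{p+1}$ norm (note $p+1<p+3$) and $\|\nabla u\|_{L^2}$ can reach it. Your proposed Gronwall inequality therefore cannot be set up with the unmodified energy $E(u)$, and the vague appeal to a "Strichartz bootstrap" is not carried out. (In $d=1$ the argument does close via $\|u\|_{L^{p+3}}^{p+3}\le\|u\|_{L^{p+1}}^{p+1}\|u\|_{L^\infty}^2$ and $H^1(\R)\hookrightarrow L^\infty$; in $d=2$ one only gets a bound growing with $T$, which still rules out blow-up but not a uniform bound.)

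The missing idea, which is the heart of the paper's proof (Lemma 4.2, mirroring Proposition 3.3), is to augment the energy by the dissipative potential itself: set
\begin{equation*}
E_{\kappa,p}(t)=\int_{\R^d}\tfrac{1}{2}|\nabla u|^2+V|u|^2+\tfrac{\lambda}{2}|u|^4+\kappa|u|^{p+1}\,dx .
\end{equation*}
Differentiating the extra term produces, besides a cross term handled by completing a square as in Proposition 3.3, the \emph{negative} contribution $-\sigma\kappa(p+1)\int|u|^{2p}$. Since $p+1<p+3<2p$ exactly when $p>3$, one interpolates
\begin{equation*}
\|u\|_{L^{p+3}}^{p+3}\le\|u\|_{L^{p+1}}^{\frac{(p-3)(p+1)}{p-1}}\,\|u\|_{L^{2p}}^{\frac{4p}{p-1}}
\le \frac{1}{2\eps}\|u\|_{L^{p+1}}^{p+1}+\frac{\eps}{2}\|u\|_{L^{2p}}^{2p},
\end{equation*}
absorbing the $L^{2p}$ piece into the negative term for $\eps$ small and controlling the $L^{p+1}$ piece by its time integral. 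This closes the estimate with a bound uniform in $t$, in all dimensions, without any Sobolev embedding or Gronwall step. You should replace your Gronwall scheme by this augmented-functional argument; the rest of your outline (including the treatment of $\|xu\|_{L^2}$, which is in fact already contained in the bound on $E_{\kappa,p}$ since $\int V|u|^2$ is part of the functional) then goes through.
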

The result strengthens the one of \cite{PSS}, as we are able to prove uniform (in-time) a-priori bounds
on the energy. As a by-product of our analysis we also obtain a well-posendess result in the case where the damping term 
is not of higher order but given by a cubic nonlinearity, i.e. we consider
\begin{equation}\label{eq:diss_cubic}
i\d_t u + \mez\Delta u = V(x) u +(\lambda -i\sigma)|u |^2 u .
\end{equation}
In the context of Bose-Einstein condensates, equation \eqref{eq:diss_cubic} corresponds to a model 
where one only takes into account two-body losses, see \cite{A, SU}.
Most of the time, however, they are neglected in view of the three-body recombination effects.
\begin{corollary}\label{cor:cubic}
Let $d\le 3$, assume $V$ to be quadratic and $u_0 \in \Sigma$. Then, for any $\sigma \ge \text{\rm max}\, \{0, -\lambda\}$ the equation \eqref{eq:diss_cubic} has a unique
global in-time solution $u \in C([0, \infty), \Sigma)$.
\end{corollary}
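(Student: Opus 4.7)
The plan is to combine a standard local well-posedness result in $\Sigma$ with a priori estimates that rule out blow-up at any finite time. Since the nonlinearity $(\lambda-i\sigma)|u|^2u$ is $H^1$-subcritical in every dimension $d\le 3$ and Lipschitz-continuous on bounded subsets of $\Sigma$, the Strichartz-based fixed-point argument for the harmonic-oscillator propagator (as in \cite{Car1,Car3,Car,Carles}) yields a unique maximal solution $u\in C([0,T^*);\Sigma)$ together with the usual blow-up alternative. It therefore suffices to prove that $\|u(t)\|_\Sigma$ remains bounded on every finite sub-interval of $[0,T^*)$.

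Testing the equation against $\bar u$ and taking imaginary parts gives the mass decay $\frac{d}{dt}\|u\|_{L^2}^2=-2\sigma\|u\|_{L^4}^4\le 0$. Testing against $\partial_t\bar u$, taking real parts, and eliminating $\partial_t\bar u$ using the equation produces, after several integrations by parts, an identity of the form
\[
\frac{dE}{dt}=-\frac{3\sigma}{8}\int|\nabla|u|^2|^2\,dx-\frac{\sigma}{2}\int|J|^2\,dx-\sigma\int V|u|^4\,dx-\sigma\lambda\int|u|^6\,dx,
\]
where $E(u):=\frac14\|\nabla u\|_{L^2}^2+\frac12\int V|u|^2+\frac{\lambda}{4}\|u\|_{L^4}^4$ is the conservative Hamiltonian. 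In the defocusing range $\lambda\ge 0$ all four right-hand-side terms are non-positive and $E\ge 0$ controls $\|\nabla u\|_{L^2}$, yielding a uniform $H^1$ bound with no further work.

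The hard part will be the focusing case $\lambda<0$: $E$ is no longer coercive and the $-\sigma\lambda\int|u|^6$ contribution to $\dot E$ has the wrong sign. I would sidestep both issues by working with the manifestly non-negative functional $G(u):=\frac14\|\nabla u\|_{L^2}^2+\frac12\int V|u|^2=E(u)-\frac{\lambda}{4}\|u\|_{L^4}^4$, and combining $\dot E$ with the dissipation law \eqref{eq:diss_rho} for $\rho=|u|^2$, which in this cubic case reads $\partial_t\rho+\diver J=-2\sigma\rho^2$ and integrates to $\frac{d}{dt}\|u\|_{L^4}^4=2\int\nabla|u|^2\cdot J\,dx-4\sigma\|u\|_{L^6}^6$. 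The two $\int|u|^6$ contributions in $\dot G=\dot E-\frac\lambda4\frac{d}{dt}\|u\|_{L^4}^4$ then cancel exactly, and the only remaining bad term is the cross piece $-\frac\lambda2\int\nabla|u|^2\cdot J\,dx$, which must be absorbed by the favourable quadratic terms $-\frac{3\sigma}{8}\|\nabla|u|^2\|_{L^2}^2-\frac{\sigma}{2}\|J\|_{L^2}^2$. A Young's inequality shows that this absorption succeeds precisely when $|\lambda|$ is controlled by $\sigma$, and the assumption $\sigma\ge-\lambda$ is exactly what is needed, giving $\dot G\le 0$ and hence $\|\nabla u(t)\|_{L^2}^2\le 4G(0)$.

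It remains to control the weight. Testing the equation against $|x|^2\bar u$ and taking imaginary parts (using that $V$ is real) yields
\[
\frac{d}{dt}\|xu(t)\|_{L^2}^2=2\int x\cdot J\,dx-2\sigma\int|x|^2|u|^4\,dx\le 2\|xu\|_{L^2}\|\nabla u\|_{L^2}
\]
by Cauchy--Schwarz, since $|J|\le|u||\nabla u|$. With the uniform $H^1$ bound already in hand, Gr\"onwall's inequality produces at most linear growth $\|xu(t)\|_{L^2}\le C(1+t)$. Combined with the mass and $H^1$ bounds, this keeps $\|u(t)\|_\Sigma$ bounded on every finite interval, contradicting $T^*<\infty$ through the blow-up alternative and completing the proof. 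All the formal computations above can be justified by the standard regularization argument (smoothing the initial data in $\Sigma$, running the computation for smooth solutions, and passing to the limit).
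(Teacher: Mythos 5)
Your proposal is correct and follows essentially the same route as the paper: local well-posedness with a blow-up alternative in $\Sigma$, decay of the mass, and, in the focusing case, monotonicity of the purely linear energy $\tfrac14\|\nabla u\|_{L^2}^2+\tfrac12\int V|u|^2\,dx$ (the paper's $E_{\rm lin}$, up to normalization) under the condition $\sigma\ge|\lambda|$. The only differences are cosmetic: you obtain the identity for $\dot G$ by combining $\dot E$ with the evolution of $\|u\|_{L^4}^4$ and absorb the cross term $\tfrac{|\lambda|}{2}\int\nabla\rho\cdot J$ via Young's inequality, whereas the paper differentiates $E_{\rm lin}$ directly and completes a square; you also add a virial/Gr\"onwall bound for $\|xu\|_{L^2}$, which the paper leaves implicit by relying on the coercivity of $V$.
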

Thus, even in the case where the two nonlinearities exactly balance, i.e. $\sigma = |\lambda|$, we have a global in-time solution. 
This indicates that dissipation acts on a faster time-scale than the focusing nonlinearity does.
In situations where $\lambda <0$ (focussing case) and $\sigma < |\lambda|$ we expect that solutions to \eqref{eq:diss_cubic} in general exhibit finite-time blow-up (see also the discussion in 
Remark \ref{rem: blow-up}). 
It remains an interesting open problem to rigorously prove that this is indeed the case. 
\begin{remark} In the case where $V (x) \equiv 0$, equation \eqref{eq:diss_cubic} is similar to the \emph{complex Ginzburg-Landau equation}, which usually reads
\begin{equation}\label{eq:GL}
\d_t u = (1 + i \alpha)\Delta u  - (1 + i\beta)|u |^2 u + \gamma u,
\end{equation}
for some given parameters $\alpha, \beta, \gamma \in \R$, see e.g. \cite{AK} for a broader introduction. In the case of no external driving field $\gamma = 0$, the main difference 
between the nonlinearly damped cubic NLS \eqref{eq:diss_cubic} and the
Ginzburg-Landau equation \eqref{eq:GL} is that the latter invokes an additional linear diffusion $\propto  \Delta u$. Equation \eqref{eq:diss_cubic} can therefore be considered as the  
diffusionless limit of \eqref{eq:GL} and our well-posedness results can consequently be reinterpreted within this limiting regime of the complex Ginzburg-Landau equation. For an application 
of this model in the context of nonlinear optics see \cite{SaMa}.
\end{remark}
We finally state the following result on time-decay of solutions as $t\to + \infty$.
\begin{corollary}\label{cor:decay}
Let $u \in C([0, \infty), \Sigma)$, be a global-in-time solution to \eqref{eq:diss_NLS}.
Then $u(t)$ decays to zero as $t \to + \infty$, in the following sense:
For each sequence of time-steps $(t_n)_{n\in \N}$ tending to infinity, there exists a subsequence $(t_{n_\ell})$, such that
\begin{equation}\label{decay}
\lim_{\ell \to \infty} \|u(t_{n_\ell}) \|_{L^2(\R^d)} = 0.
\end{equation}
Moreover, let $u_n(t,x):=u(t+t_n,x)$, then, up to extraction of subsequences, we also have $u_n \to 0$ in $ L^2(0,T);H^1_{\rm loc}(\R^d))$.
\end{corollary}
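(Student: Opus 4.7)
The proof rests on the $L^2$-dissipation identity for \eqref{eq:diss_NLS}, obtained by pairing with $\bar u$ and taking the imaginary part:
\begin{equation*}
\frac{d}{dt}\|u(t)\|_{L^2}^2 = -2\sigma \|u(t)\|_{L^{p+1}}^{p+1}.
\end{equation*}
Since $\sigma>0$, the mass $M(t):=\|u(t)\|_{L^2}^2$ is non-increasing and bounded below by $0$, so $M(t)\to M_\infty\geq 0$ as $t\to\infty$. Integrating in time gives
\begin{equation*}
\int_0^\infty \|u(s)\|_{L^{p+1}}^{p+1}\,ds = \frac{M(0)-M_\infty}{2\sigma} < \infty,
\end{equation*}
which in particular produces a sequence $s_k\to\infty$ with $\|u(s_k)\|_{L^{p+1}}\to 0$.

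The crux of the argument is to show $M_\infty = 0$. For this I would rely on the uniform-in-time bound $\sup_{t\geq 0}\|u(t)\|_\Sigma \leq C(\|u_0\|_\Sigma)$, a by-product of the a priori estimates developed in Sections \ref{sec:critical} and \ref{sec:sub}. A Hölder-type splitting then yields, for every $R>0$,
\begin{equation*}
\|u(s_k)\|_{L^2}^2 \leq |B_R|^{(p-1)/(p+1)}\|u(s_k)\|_{L^{p+1}}^2 + \frac{1}{R^2}\|xu(s_k)\|_{L^2}^2 \leq C_R\|u(s_k)\|_{L^{p+1}}^2 + \frac{C}{R^2}.
\end{equation*}
Sending $k\to\infty$ and then $R\to\infty$ forces $M_\infty = 0$. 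By monotonicity $\|u(t)\|_{L^2}\to 0$ as $t\to\infty$, and \eqref{decay} follows trivially for any prescribed sequence $(t_n)$.

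For the second conclusion, set $u_n(t,x) := u(t+t_n,x)$. The uniform $H^1$-bound gives $\{u_n\}$ bounded in $L^\infty((0,T);H^1(\R^d))$, while the first part gives $u_n\to 0$ strongly in $L^\infty((0,T);L^2(\R^d))$, hence in $L^2((0,T);L^2(\R^d))$. Extracting a subsequence, $u_n \rightharpoonup 0$ weakly in $L^2((0,T);H^1(\R^d))$, so weakly in $L^2((0,T);H^1_{\rm loc}(\R^d))$; combined with the strong $L^2_{\rm loc}$ convergence, this is the convergence stated in the corollary.

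The main obstacle is precisely the uniform-in-time $\Sigma$-bound: the $H^1$-piece is controlled by the usual energy identity together with the sign-definite dissipative correction (and, in the critical case, the spacetime $L^{10}$-estimate of Theorem \ref{th:critical}), whereas the uniformity of $\|xu\|_{L^2}$ is more delicate and requires a virial-type computation using the confining potential to absorb the cross term $\int x\cdot J\,dx$ arising in the evolution of $\|xu\|_{L^2}^2$. Once that uniform $\Sigma$-bound is in hand, the rest of the corollary reduces to the splitting inequality above and the monotone convergence of $M(t)$.
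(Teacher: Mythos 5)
Your argument for the first assertion is correct but takes a genuinely different route from the paper. The paper applies the Aubin--Lions lemma to the translates $u_n(t,x)=u(t+t_n,x)$ (bounded in $C([0,\infty);H^1)$ with $\d_t u_n$ bounded in $C([0,\infty);H^{-1})$), extracts a subsequence converging in $C([0,T];L^2)$, and identifies the limit as zero from the space--time integrability of $\|u\|_{L^{p+1}}^{p+1}$; this only yields \eqref{decay} along subsequences. You instead exploit the monotonicity of the mass $M(t)$ together with the weighted H\"older splitting
\begin{equation*}
\|u(s_k)\|_{L^2}^2\le |B_R|^{(p-1)/(p+1)}\|u(s_k)\|_{L^{p+1}}^2+\frac{1}{R^2}\|xu(s_k)\|_{L^2}^2 ,
\end{equation*}
which, given the uniform $\Sigma$-bound (already available in the paper as Corollary~\ref{cor:a-priori} and Lemma~\ref{lem:sub}, so no new virial computation is needed), forces $M_\infty=0$ and hence $\|u(t)\|_{L^2}\to 0$ along the \emph{full} time axis. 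This is more elementary (no compactness machinery for the first claim) and in fact strictly stronger than \eqref{decay}. Both routes ultimately lean on the uniform bound on $\|xu(t)\|_{L^2}$ (the paper needs it for tightness in the Aubin--Lions step on all of $\R^d$), so neither is more demanding in terms of a priori information.

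For the second assertion there is a genuine gap. You produce only $u_n\rightharpoonup 0$ weakly in $L^2((0,T);H^1)$ together with strong convergence in $L^2((0,T);L^2)$, and assert that this ``is the convergence stated in the corollary''. It is not: the statement claims \emph{strong} convergence in $L^2((0,T);H^1_{\rm loc}(\R^d))$, and weak $H^1$ convergence plus strong $L^2$ convergence does not upgrade to strong convergence of the gradients (the standard counterexample being a fixed profile modulated by increasingly rapid oscillations, which is admissible at the level of the bounds you invoke). To close this one needs a compactness gain \emph{above} the $H^1$ level: the paper obtains a uniform bound on $(u_n)$ in $L^2((0,T);H^{3/2}_{\rm loc}(\R^d))$ from the Kato local smoothing effect of the propagator $U(t)$ \cite{CoSa}, and then applies the Aubin--Lions type compactness result of \cite{RaTe} to conclude pre-compactness in $L^2((0,T);H^{1}_{\rm loc}(\R^d))$, the limit being identified as zero by the first part. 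Your proposal is missing this regularity-gain step entirely, and without it the second conclusion is not established.
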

Unfortunately, we are not able so far to derive explicit rates for the time-decay of the (total mass of the) solution.
Note that the decay rates obtained in \cite{KiShi, Shi} in general do not hold in our case,
since we do not restrict ourselves to small solutions.

\begin{remark} In the fluid dynamics picture of quantum mechanics the equation for the mass density $\rho=|u|^2$ has to 
be complemented by an equation for $J=\im(\conj{u}\nabla u)$, forming the so-called \emph{quantum hydrodynamic system}:
\begin{equation}\label{eq:QHD}
\left \{
\begin{split}
& \,  \d_t\rho+\diver J=-2\sigma\rho^{(p+1)/2},\\
& \,  \d_t J +\diver \left( \frac{J \otimes J}{\rho} \right) + \rho \nabla V + \frac{\lambda}{2}\, \nabla \rho^2  =   \frac{1}{2}\, \rho \nabla \left( \frac{\Delta \sqrt \rho} {\sqrt \rho} \right) - 2\sigma\rho^{(p-1)/2}J.
\end{split}
\right.
\end{equation}
Observe that the equation for the current density $J$ also picks up a nonlinear damping term.
The connection between \eqref{eq:diss_NLS} and \eqref{eq:QHD} can be established following \cite{AnMa, GaMa}, in order to translate our well-posedness results
for \eqref{eq:diss_NLS} into the analogous statements for \eqref{eq:QHD}.
\end{remark}

\section{The energy critical case}\label{sec:critical}

This Section is devoted to the Proof of Theorem \ref{th:critical}. That is, we want to show that solutions to
\begin{equation}\label{eq:quintic_NLS}
i\d_t u  + \mez\Delta u =  V(x) u+   \lambda|u |^2 u -i\sigma| u |^{4}u , \quad (t,x) \in [0, \infty)\times \R^3,
\end{equation}
with $u(0) = u_0(x) \in \Sigma$, exist globally in-time. To this end, we shall first derive 
several a-priori bounds on the solution.

\subsection{A-priori estimates}\label{sec:a-priori} From the dissipation equation \eqref{eq:diss_rho}, we immediately obtain
the following lemma.
\begin{lemma}  \label{lem:a-priori}
Let $u(t) \in \Sigma$ be a solution of \eqref{eq:quintic_NLS}. Then it holds
\begin{equation}\label{eq:a-priori1}
\| u(t, \cdot) \|_{L^2} \le \| u_0 \|_{L^2},\quad \forall \, t \ge 0,
\end{equation}
and in addition we have
\begin{equation}\label{eq:a-priori2}
\int _0^\infty \| u(t, \cdot) \|_{L^6}^6\, dt \le C (\| u_0 \|_{L^2}) .
\end{equation}
\end{lemma}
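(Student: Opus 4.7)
The plan is to read off both estimates directly from the dissipation identity \eqref{eq:diss_rho} stated in the introduction (specialized to $p=5$, so that the right-hand side is $-2\sigma\rho^{3}=-2\sigma|u|^{6}$). Formally, multiplying \eqref{eq:quintic_NLS} by $\conj{u}$, taking imaginary parts and using that $V$ is real and $\lambda$ is real, the cubic Hamiltonian term drops out and only the dissipative contribution survives, yielding
\begin{equation*}
\tfrac{1}{2}\der{}{t}|u|^{2}+\tfrac{1}{2}\diver\im(\conj{u}\nabla u)=-\sigma|u|^{6}.
\end{equation*}
Integrating over $\R^{3}$ makes the divergence term vanish (for $u(t)\in\Sigma\hookrightarrow H^{1}$ one has $J=\im(\conj{u}\nabla u)\in L^{1}(\R^{3})$ with no boundary contribution), so
\begin{equation*}
\der{}{t}\|u(t)\|_{L^{2}}^{2}=-2\sigma\|u(t)\|_{L^{6}}^{6}\le 0.
\end{equation*}
This gives the monotone decay \eqref{eq:a-priori1}.

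Next, I would integrate the above identity in time from $0$ to an arbitrary $T>0$:
\begin{equation*}
2\sigma\int_{0}^{T}\|u(t)\|_{L^{6}}^{6}\,dt=\|u_{0}\|_{L^{2}}^{2}-\|u(T)\|_{L^{2}}^{2}\le\|u_{0}\|_{L^{2}}^{2}.
\end{equation*}
Since the left-hand side is monotone in $T$ and bounded, sending $T\to\infty$ produces \eqref{eq:a-priori2} with the explicit constant $C(\|u_{0}\|_{L^{2}})=\|u_{0}\|_{L^{2}}^{2}/(2\sigma)$.

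The only real issue is justifying the computation for a solution merely in $C([0,\infty),\Sigma)$: one has to make sense of $|u|^{4}u\cdot\conj{u}=|u|^{6}$ being integrable and of pairing the equation with $\conj{u}$ in the appropriate duality. I would handle this in the standard way: prove the identity first for $H^{2}\cap\Sigma$ data, where the computation is literal, and then pass to the $\Sigma$ limit by the usual density/approximation argument (regularizing the initial datum, using local well-posedness to be established in the subsequent sections, and invoking lower semicontinuity of $\|u\|_{L^{6}}^{6}$). Alternatively, at this stage one can simply take the identity as a formal \emph{a priori} estimate, since the Lemma is only used in conjunction with the local-in-time Cauchy theory developed later. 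Either way, no further ingredient is required; the content of the Lemma is nothing beyond the mass-dissipation law \eqref{eq:diss_rho}.
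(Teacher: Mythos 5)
Your proof is correct and follows essentially the same route as the paper: both integrate the mass-dissipation identity \eqref{eq:diss_rho} (with $p=5$) in space to get the monotone decay of $\|u(t)\|_{L^2}^2$, then in time to obtain the $L^6_{t,x}$ space-time bound, with the rigorous justification deferred to a standard density/approximation argument. Your version is slightly more explicit (deriving the local identity from the equation, giving the constant $\|u_0\|_{L^2}^2/(2\sigma)$, and noting the monotone passage $T\to\infty$), but there is no substantive difference.
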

\begin{proof} We argue formally and first integrate \eqref{eq:diss_rho} w.r.t. $x$ to obtain
$$
\frac{d}{dt} \, \| u(t, \cdot) \|_{L^2}^2 = - 2 \sigma \int _{\R^3} | u(t,x) |^6dx \le 0,
$$
which consequently yields \eqref{eq:a-priori1}. If we then also integrate w.r.t. $t$, we get
\begin{equation*}
2\sigma\int_0^\infty \int_{\R^3} | u(t,x) |^6 d x\, d t=\int_{\R^3} | u(0,x) |^2 d x-
\int_{\R^3}| u(t,x) |^2 \, d x\le \int_{\R^3} | u(0,x) |^2 d x
\end{equation*}
and thus \eqref{eq:a-priori2}. By a standard density argument this a-priori estimates are easily 
shown to hold for any $u(t, \cdot) \in \Sigma$.
\end{proof}

To proceed further consider the following energy-type functional
\begin{equation}\label{eq:diss_NLS_en}
E_\kappa(t):=\int_{\R^3}\frac{1}{2}\, |\nabla u(t,x)|^2+ V(x) |u(t,x)|^2 + \frac{\lambda}{2}\, |u(t,x)|^4+\kappa|u(t,x)|^6\de x,
\end{equation}
for some parameter $\kappa  >0$ to be chosen later on.
The following a-priori bound on this functional will be a key ingredient in the proof of Theorem \ref{th:critical}.

\begin{proposition}  \label{prop:a-priori}
Let $u(t) \in \Sigma$ be a solution of \eqref{eq:quintic_NLS} and $V(x)$ a quadratic confinement of the form \eqref{eq:pot}. 
Moreover, let $0 < \kappa<\sigma / 6$. Then, it holds
\begin{equation*}
E_\kappa(t)\le E_\kappa(0)+C (\|u_0\|_{L^2}), \quad \forall \, t\ge 0,
\end{equation*}
where $C= C(\kappa, |\lambda|, \sigma)>0$ and we also have the following space-time bounds
\begin{align*}
\int_0^\infty\int_{\R^3}|u(t, x)|^4|\nabla u(t,x)|^2\, d x  \, d t\le & C(E_\kappa(0), \|u_0\|_{L^2}),\\
\int_0^\infty\int_{\R^3}V(x)|u(t, x)|^6\, d x\, d t\le & \, C(E_\kappa(0), \|u_0\|_{L^2}).
\end{align*}
In particular, it also holds
\begin{align*}
\int_0^\infty\int_{\R^3}|u(t,x)|^{10}\, d x\,  d t\le  \, C(E_\kappa(0), \|u_0\|_{L^2}).
\end{align*}
\end{proposition}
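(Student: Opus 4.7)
The plan is to bound $E_\kappa(t)$ through a differential inequality whose right-hand side is time-integrable by Lemma~\ref{lem:a-priori}, and then to read off the claimed space-time bounds from the dissipative good terms appearing on the left after integration. Differentiating the classical energy $E(t):=\int\frac{1}{2}|\nabla u|^2+V|u|^2+\frac{\lambda}{2}|u|^4\,\de x$, pairing the equation with $\overline{\partial_t u}$ and taking real parts, the Hamiltonian part contributes the purely imaginary quantity $i|\partial_t u|^2$ which drops out, leaving $\frac{d}{dt}E=-2\sigma\operatorname{Im}\int |u|^4 u\,\overline{\partial_t u}\,\de x$. Substituting $\overline{\partial_t u}$ from the equation, integrating by parts once more, and evaluating the real parts via the Madelung decomposition $u=Re^{i\theta}$, one arrives at
$$
\frac{d}{dt}E(t) = -5\sigma\int R^4|\nabla R|^2\,\de x - \sigma\int R^6|\nabla\theta|^2\,\de x - 2\sigma\int V|u|^6\,\de x - 2\sigma\lambda\int|u|^8\,\de x.
$$
All terms except possibly the last are dissipative; the last is bad only when $\lambda<0$.

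For the added piece $\kappa\int|u|^6$, multiplying the density equation \eqref{eq:diss_rho} by $3|u|^4$ and integrating by parts gives
$$
\frac{d}{dt}\,\kappa\!\int|u|^6\,\de x \;=\; 12\kappa\int R^5\,\nabla R\cdot\nabla\theta\,\de x \;-\; 6\sigma\kappa\int|u|^{10}\,\de x.
$$
Combined with the Young-type estimate $12R^5|\nabla R||\nabla\theta|\le 6R^4|\nabla R|^2+6R^6|\nabla\theta|^2$, this yields
$$
\frac{d}{dt}E_\kappa(t) \le -(5\sigma-6\kappa)\!\int\! R^4|\nabla R|^2 - (\sigma-6\kappa)\!\int\! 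R^6|\nabla\theta|^2 - 2\sigma\!\int\! V|u|^6 - 2\sigma\lambda\!\int\!|u|^8 - 6\sigma\kappa\!\int\!|u|^{10}.
$$
The restriction $0<\kappa<\sigma/6$ is exactly what keeps the coefficients $(5\sigma-6\kappa)$ and $(\sigma-6\kappa)$ strictly positive, placing the two gradient integrals on the good (dissipative) side.

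The residual bad term $-2\sigma\lambda\int|u|^8$ (active when $\lambda<0$) is tamed by the log-convex interpolation $\|u\|_{L^8}^8\le\|u\|_{L^6}^3\|u\|_{L^{10}}^5$ followed by Young, which gives $2\sigma|\lambda|\int|u|^8\le 2\sigma\kappa\int|u|^{10}+\frac{\sigma|\lambda|^2}{2\kappa}\int|u|^6$. The $|u|^{10}$ contribution is absorbed into the strong $-6\sigma\kappa\int|u|^{10}$ dissipation, leaving only $C(\sigma,\lambda,\kappa)\int|u|^6$ on the right, which is time-integrable by Lemma~\ref{lem:a-priori}. Integrating in $t$ therefore delivers simultaneously a uniform bound on $E_\kappa(t)$ and the three claimed space-time estimates (the $|u|^4|\nabla u|^2$ bound follows from the Madelung identity $|u|^4|\nabla u|^2=R^4|\nabla R|^2+R^6|\nabla\theta|^2$). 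To separate $E_\kappa(t)$ itself from the integrated dissipation, a lower bound $E_\kappa(t)\ge -C(\|u_0\|_{L^2})$ is also needed; this follows from the interpolation $\|u\|_{L^4}^4\le\|u_0\|_{L^2}\|u\|_{L^6}^3$ and Young, absorbing $\frac{\lambda}{2}|u|^4$ into $\kappa|u|^6$. The main obstacle is the careful algebra of the two time-derivative identities and the pinpointing of the sharp threshold $\kappa<\sigma/6$ that makes the cross-term absorption work; since $u$ is only assumed to lie in $\Sigma$, all formal manipulations must ultimately be justified by approximating $u_0$ by smooth data and passing to the limit.
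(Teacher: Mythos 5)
Your proof is correct and follows essentially the same route as the paper: differentiate $E_\kappa$, absorb the cross term produced by $\kappa\int|u|^6$ into the dissipative gradient terms under the condition $\kappa<\sigma/6$, interpolate $L^8$ between $L^6$ and $L^{10}$ to tame the $\lambda$-term, and integrate in time using Lemma \ref{lem:a-priori}. The only (immaterial) differences are that you phrase the identities in Madelung variables and close the cross term with Young's inequality where the paper completes a square in $(\nabla\rho,J)$; your explicit lower bound on $E_\kappa$ is a useful detail that the paper leaves implicit until Corollary \ref{cor:a-priori}.
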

\begin{proof} Let us assume $u$ is regular enough to perform the following formal manipulations.
A standard density argument can then be used to justify all of them rigorously.
Computing the time-derivative of $E_\kappa(t)$, we obtain
\begin{align*}
\frac{d}{dt} \, E_\kappa(t)
=&\ \sigma\int_{\R^3}|u|^4\re(u\Delta\conj{u})\,dx-2\sigma\int_ {\R^3}V|u|^6\,dx-2\sigma\lambda\int_{\R^3}|u|^8\,dx\\
&-3\kappa\int_{\R^3}|u|^4\im(\conj{u}\Delta u)\,dx-6\kappa\sigma\int_{\R^3}|u|^{10}\, dx\\
=&-\sigma\int_{\R^3} \rho|\nabla\rho|^2\, d x-\sigma\int_{\R^3}\rho^2|\nabla u|^2\, d x-2\sigma\lambda\int_{\R^3}\rho^4\, d x\\
&-3\kappa \int_{\R^3}\rho^2\im(\conj{u}\Delta u)\, d x-6\sigma \kappa \int_{\R^3}\rho^5\, d x - 2\sigma \int_{\R^3} V(x) \rho^3 \, dx\\
=&-\sigma\int_{\R^3}\rho|\nabla\rho|^2\, d x-\sigma\int_{\R^3}\rho^2|\nabla u|^2\, d x-2\sigma\lambda\int_{\R^3}\rho^4\, d x\\
&+6\kappa \int_{\R^3}\rho\nabla\rho\cdot \im(\conj{u}\nabla u) \, d x-6\sigma \kappa \int_{\R^3}\rho^5\, d x - 2\sigma \int_{\R^3} V(x) \rho^3 \, dx,
\end{align*}
where for the last equality we perfom an integration by parts, using the fact that
$\im(\conj{u}\Delta u)=\diver\pare{\im(\conj{u}\nabla u)}$.
Next, we note that $\nabla\rho=2\re(\conj{u}\nabla u)$ and thus
\begin{equation*}
\rho|\nabla u|^2 =|\conj{u}\nabla u|^2= |\re(\conj{u}\nabla u)|^2+|\im(\conj{u }\nabla u)|^2 = \1{4}|\nabla\rho|^2+|J|^2,
\end{equation*}
where, as before, $J=\im(\conj{u}\nabla u)$ denotes the current density. Using this, we can rewrite
\begin{align*}
\int_{\R^3}\rho\nabla\rho\cdot\im(\conj{u}\nabla u) \, d x
=& -\int_{\R^3}\rho\modu{\mez\nabla\rho-J}^2\, d x
+\int\rho\pare{\1{4}|\nabla\rho|^2+|J|^2}\, d x\\
=& -\int_{\R^3}\rho\modu{\mez\nabla\rho-J}^2\, d x
+\int_{\R^3}\rho^2|\nabla u|^2\, d x.
\end{align*}
This consequently yields
\begin{equation} \label{eq:E}
\begin{split}
\frac{d}{dt} \, E_\kappa(t)
=&-\sigma\int_{\R^3}\rho|\nabla\rho|^2\, d x-(\sigma-6\kappa)\int_{\R^3}\rho^2|\nabla u|^2\, d x-2\sigma\lambda\int_{\R^3}\rho^4\, d x\\
&-6\kappa \int_{\R^3}\rho\modu{\mez\nabla\rho-J}^2\, d x -6\sigma \kappa \int_{\R^3}\rho^5\, d x - \sigma \int_{\R^3} V(x) \rho^3 \, dx.
\end{split}
\end{equation}
Under the assumption $\kappa< \sigma /6$, all the terms on the right hand side are negative
(recall that $V(x)$ is assumed to be a quadratic confinement), except for the one
proportional to $\|\rho \|_{L^4}$, which we shall treat by interpolation. Since
\begin{equation*}
\|\rho\|_{L^4}\le\|\rho\|_{L^3}^{3/8}\, \|\rho\|_{L^5}^{5/8},
\end{equation*}
we have
\begin{equation*}
\int_{\R^3}\rho^4 \, d x\le\1{2\eps}\int_{\R^3}\rho^3\, d x+\frac{\eps}{2}\int_{\R^3}\rho^5\, d x,
\end{equation*}
for some arbitrary constants $\eps >0$. Choosing $\eps < 6 \kappa / |\lambda|$, we consequently obtain
\begin{equation} \label{eq:E1}
\begin{split}
\frac{d}{dt} \, E_\kappa(t)
\le &-\sigma\int_{\R^3}\rho|\nabla\rho|^2\, d x-(\sigma-6\kappa)\int_{\R^3}\rho^2|\nabla u|^2\, d x - \sigma \int_{\R^3} V(x) \rho^3 \, dx\\
&-6\kappa \int_{\R^3}\rho\modu{\mez\nabla\rho-J}^2\, d x -C_1(\eps) \int_{\R^3}\rho^5\, d x + C_2(\eps) \int_{\R^3}\rho^3\, d x.
\end{split}
\end{equation}
There still remains a positive term on the r.h.s., namely the last one. However, we already know from \eqref{eq:a-priori2} that
\begin{equation*}
\int_0^T\int_{\R^3}\rho^3\, d x \, d t\le C (\| u_0 \|_{L^2}).
\end{equation*}
We can therefore integrate \eqref{eq:E1} w.r.t. time and using the fact that $E_\kappa(0)< \infty$ by assumption, we
obtain the assertion of the proposition, provided $\kappa< \sigma /6$.
\end{proof}
\begin{remark} The proof shows, that it would not be sufficient to consider only the energy functional
of the unperturbed equation, i.e.
\begin{equation*}
E_0(t)=\int_{\R^3}\frac{1}{2}\, |\nabla u(t,x)|^2+ V(x) u(t,x) + \frac{\lambda}{2}\, |u(t,x)|^4.
\end{equation*}
In fact, computing the
time-derivative of $E_0(t)$, we find
\begin{equation*}
\frac{1}{\sigma}\, \frac{d}{d t} \, E_0(t)=- \int_{\R^3}\rho|\nabla\rho|^2\, d x- \int_{\R^3}\rho^2|\nabla u |^2\, d x
-2 \int_{\R^3}V \rho^3\, d x
-2\lambda \int_{\R^3}\rho^4\, d x.
\end{equation*}
Thus, if the $\lambda <0$ (focusing cubic nonlinearity) the last term on the r.h.s.
is positive and hence we can not conclude that $E_0(t)$ is non increasing.
\end{remark}

As an immediate corollary of Proposition \ref{prop:a-priori} we obtain the uniform boundedness of the energy norm of $u(t)$.

\begin{corollary} \label{cor:a-priori}
Let $V$ be quadratic and $u(t)$ a solution to \eqref{eq:quintic_NLS}. Then
\begin{equation*}
\| u(t, \cdot) \|_{\Sigma} \le C(\| u_0 \|_{\Sigma}),\quad \forall \, t \ge 0.
\end{equation*}
\end{corollary}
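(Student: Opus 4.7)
The plan is to read off a uniform bound on each piece of the $\Sigma$-norm directly from the modified energy $E_\kappa$. Lemma \ref{lem:a-priori} already delivers $\|u(t)\|_{L^2}\le\|u_0\|_{L^2}$, so only $\|\nabla u(t)\|_{L^2}$ and $\|x u(t)\|_{L^2}$ need to be controlled uniformly in $t\ge 0$. Fix $\kappa<\sigma/6$; then Proposition \ref{prop:a-priori} gives
$$
E_\kappa(t)\le E_\kappa(0)+C(\|u_0\|_{L^2})\quad\forall\, t\ge 0.
$$
Since the confinement is nondegenerate (all $\omega_j\neq 0$), one has $\int V|u|^2\,\de x\ge c_V\|xu\|_{L^2}^2$ with $c_V=\tfrac{1}{2}\min_j\omega_j^2>0$. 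Thus the three terms $\tfrac{1}{2}\|\nabla u\|_{L^2}^2$, $c_V\|xu\|_{L^2}^2$, and $\kappa\|u\|_{L^6}^6$ contained in $E_\kappa$ are all nonnegative, and the only component whose sign is unclear is the $L^4$-term $\tfrac{\lambda}{2}\|u\|_{L^4}^4$.

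In the defocusing case $\lambda\ge 0$ this term is nonnegative as well, and the claim follows immediately. In the focusing case $\lambda<0$, I would absorb the negative contribution into the quintic and mass pieces by standard interpolation. H\"older's inequality gives
$$
\|u\|_{L^4}^4\le\|u\|_{L^2}\,\|u\|_{L^6}^3,
$$
and Young's inequality with a small parameter $\eps>0$ yields
$$
\frac{|\lambda|}{2}\,\|u\|_{L^2}\,\|u\|_{L^6}^3\le\eps\,\|u\|_{L^6}^6+C_\eps\,\|u\|_{L^2}^2.
$$
Picking $\eps<\kappa$ and using Lemma \ref{lem:a-priori} to bound $\|u\|_{L^2}$ by $\|u_0\|_{L^2}$, this negative contribution is dominated by $\kappa\|u\|_{L^6}^6$ up to a constant depending only on $\|u_0\|_{L^2}$.

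Combining, I obtain $\tfrac{1}{2}\|\nabla u(t)\|_{L^2}^2+c_V\|xu(t)\|_{L^2}^2\le E_\kappa(0)+C(\|u_0\|_{L^2})$ for all $t\ge 0$. To close the argument it suffices to bound $E_\kappa(0)$ by $\|u_0\|_\Sigma$: Sobolev embedding $H^1(\R^3)\hookrightarrow L^6$ controls $\|u_0\|_{L^6}$, while interpolation between $L^2$ and $L^6$ controls $\|u_0\|_{L^4}$. The only really delicate point is the focusing case, and the clever aspect is that the energy-critical $L^6$ term that was included in $E_\kappa$ precisely to make Proposition \ref{prop:a-priori} go through simultaneously supplies the strength needed to dominate the subcritical negative $L^4$ contribution by elementary means.
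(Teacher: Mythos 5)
Your argument is correct and follows essentially the same route as the paper: use the monotonicity of $E_\kappa$ from Proposition \ref{prop:a-priori}, and in the focusing case absorb the negative $\tfrac{\lambda}{2}\|u\|_{L^4}^4$ term via the interpolation $\|u\|_{L^4}^4\le\|u\|_{L^2}\|u\|_{L^6}^3$ and Young's inequality into the $\kappa\|u\|_{L^6}^6$ piece of $E_\kappa$ and the conserved-in-sign $L^2$ mass. Your closing remark bounding $E_\kappa(0)$ by $\|u_0\|_\Sigma$ via Sobolev embedding is a detail the paper leaves implicit, but it does not change the method.
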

\begin{proof} We already know from Lemma \ref{lem:a-priori} that the $L^2$ norm of $u(t)$ is bounded. If $\lambda >0$ then the uniform bound on $E_\kappa(t)$
immediately yields the assertion. On the other hand, if $0>\lambda = - |\lambda|$, we write
\begin{equation*}
\|\nabla u(t, \cdot)\|_{L^2}^2 + \| x  u(t, \cdot)\|_{L^2}^2  \le 2 E_0(t)+\frac{|\lambda|}{2}\|u (t, \cdot)\|_{L^4}^4,
\end{equation*}
since $V$ is quadratic. Now, by interpolation $\|u \|_{L^4}^4\le \| u \|_{L^2}\| u \|_{L^6}^3$, and thus
\begin{equation*}
\|\nabla u(t, \cdot)\|_{L^2}^2+ \| x  u(t, \cdot) \|_{L^2}^2 \le 2 E_0 (t)+\frac{|\lambda|}{4\eps}\|u (t, \cdot)\|_{L^2}^2
+\frac{ |\lambda| \eps}{4}\|u(t, \cdot)\|_{L^6}^6
\end{equation*}
for some $\eps > 0$. If we then choose $\eps=\frac{\kappa}{8 |\lambda|}$, we get
\begin{align*}
\|\nabla u(t, \cdot) \|_{L^2}^2  + \| x  u(t, \cdot) \|_{L^2}^2\le & \ 2 E_\kappa (t)+\frac{2\lambda^2}{\kappa}\| u (t, \cdot)\|_{L^2}^2 \\
\le & \  2 E_\kappa(0)+\frac{2\lambda^2}{\kappa} \| u_0\|_{L^2}+C(\|u_0\|_{L^2}).
\end{align*}
Since the left hand side is the sum of two non-negative term, each of them is bounded individually by a uniform constant $C$, depending only on $ \| u_0 \|_\Sigma$.
\end{proof}

\subsection{Strichartz estimates} 
In order to prove global well-posedness of \eqref{eq:quintic_NLS} we shall heavily rely on the use of Strichartz estimates.
Let us briefly recall the definition and main properties of these estimates
for the following Schr\"odinger propagator
\begin{equation*}
U(t)=e^{-itH},\quad H:= -\frac{1}{2}\Delta +V(x),
\end{equation*}
where $V$ is given by \eqref{eq:pot}. The operator $U(t)$ consequently generates the linear time-evolution corresponding to \eqref{eq:diss_NLS}.
We first note that in view of Mehler's formula, cf. \cite{Carles, Car1}, the group $U(t)$ is not only bounded on
$L^2(\R^d)$, but also enjoys dispersive properties for small
time. More precisely it holds
\begin{equation}
\label{eq:dispest}
\lVert U(t)f \rVert_{L^\infty(\R^d)}\le \frac{C}{|t|^{d/2}}\lVert
f \rVert_{L^1(\R^d)},\quad \text{for }|t|\le \delta,
\end{equation}
for some $\delta=\delta(\omega_j)>0$, see \cite{Car1, Car2}. Note that for an harmonic potential as in
\eqref{eq:pot}, $\delta$ is
\emph{necessarily finite}, since $H$ has eigenvalues (see also below). 
From this dispersive estimates one is led to the following (local in-time) Strichartz estimates for $U(t)$ in terms of admissible index pairs.
\begin{definition}
A pair $(q,r)$ is admissible if $2\le r
\le\frac{2d}{d-2}$ (resp. $2\le r\le \infty$, if $d=1$ and $2\le r<
\infty$, if $d=2$)
and
$$\frac{2}{q}= d\left( \frac{1}{2}-\frac{1}{r}\right).$$ 
Then, for any space-time slab $I\times\R^d$, we can define the Strichartz norm
\begin{equation*}
\|f\|_{S^0(I\times\R^d)}:=\sup_{(q, r)}\|f\|_{L^q_tL^r_x(I\times\R^d)},
\end{equation*}
where the supremum is taken over all admissible pairs of exponents $(q, r)$.
\end{definition}
Following \cite{KT} (see also \cite{Car1, Car2}) one has the following estimates, where $(q',r')$ denotes the H\"older dual exponents of $(q,r)$:
\begin{lemma}\label{thm:strich}
Let $(q,r)$, $(q_1,r_1)$ and~$ (q_2,r_2)$ be admissible pairs. Let $I$
be some finite time interval. Then it holds
\begin{equation*}\label{eq:strich}
 \left \| U(\cdot)\varphi \right\|_{L^q(I;L^r)}\le C(r, d) |I|^{1/q} \| \varphi
 \|_{L^2},
\end{equation*}
and also
\begin{equation*}\label{eq:strichnl}
   \Big \| \int_{I\cap\{s\leq
   t\}} U(t-s)F(s) \, d s
   \Big \|_{L^{q_1}(I;L^{r_1})}\le C(r_1,r_2, d) |I|^{1/q_1}  \|
   F \|_{L^{q'_2}(I;L^{r'_2})} .
 \end{equation*}
\end{lemma}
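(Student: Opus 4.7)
The plan is to reduce to the abstract Strichartz machinery of Keel-Tao \cite{KT}, whose two hypotheses are: (i) uniform boundedness of the propagator on $L^2$, and (ii) the $L^1 \to L^\infty$ dispersive bound. The first holds because $U(t) = e^{-itH}$ is $L^2$-unitary, and the second is precisely \eqref{eq:dispest}. The obstruction is that the harmonic oscillator $H$ has pure point spectrum, so its propagator cannot be globally dispersive; this is reflected in \eqref{eq:dispest} only being valid for $|t| \le \delta$. Consequently, the Keel-Tao output is \emph{local in time}, and we pay a multiplicative factor $|I|^{1/q}$ for the patching.

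First I would apply Keel-Tao \cite{KT} on a single short window $J \subset I$ of length at most $\delta$. There, hypotheses (i)--(ii) hold with constants independent of $J$, and we obtain the homogeneous Strichartz $\|U(\cdot)\varphi\|_{L^q(J;L^r)} \le C(r,d)\|\varphi\|_{L^2}$ and the retarded inhomogeneous Strichartz $\|\int_{J \cap \{s \le t\}} U(t-s)F(s)\,ds\|_{L^{q_1}(J;L^{r_1})} \le C\|F\|_{L^{q_2'}(J;L^{r_2'})}$ for every admissible exponent pair, with no $|J|$-dependent factor on the right.

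Next I would partition $I = \bigcup_{k=1}^N J_k$ into consecutive subintervals of length at most $\delta$, so that $N \lesssim 1 + |I|/\delta$. For the homogeneous bound, on $J_k = [t_k, t_{k+1}]$ the group law gives $U(t)\varphi = U(t - t_k) \varphi_k$ with $\varphi_k := U(t_k)\varphi$ and $\|\varphi_k\|_{L^2} = \|\varphi\|_{L^2}$ by unitarity. Applying the short-window estimate on each $J_k$ and summing the $q$-th powers yields $\|U(\cdot)\varphi\|_{L^q(I;L^r)}^q \le N C^q \|\varphi\|_{L^2}^q$, producing the stated $|I|^{1/q}$ factor. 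For the inhomogeneous estimate I would split $F = \sum_k \mathbf{1}_{J_k}F$ and, for $t \in J_k$, decompose the retarded integral into a diagonal piece on $J_k$ (controlled by the short-window inhomogeneous estimate) plus off-diagonal pieces $U(t-t_k)\psi_{jk}$ with $\psi_{jk} := \int_{J_j} U(t_k - s) F_j(s)\,ds \in L^2$; the $L^2$ norms of the $\psi_{jk}$ are controlled by duality against the short-window homogeneous estimate on $J_j$, and the off-diagonal terms are then summed using the homogeneous estimate on $J_k$.

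The main difficulty is bookkeeping in the inhomogeneous patching: one must verify that collecting the lower-triangular sum over $j<k$ produces only the single overall factor $N^{1/q_1} \sim |I|^{1/q_1}$, with no additional logarithmic or polynomial loss in $N$. The retarded (causal) structure of the integrand, together with the fact that Keel-Tao furnishes the sharp estimate on each subinterval for \emph{any} pair of admissible exponents, is exactly what allows this summation to close.
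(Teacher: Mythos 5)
Your route is the same as the paper's: invoke the abstract Keel--Tao machinery \cite{KT} on windows of length at most $\delta$, where the Mehler dispersive bound \eqref{eq:dispest} supplies the $L^1\to L^\infty$ hypothesis, and then patch $N\lesssim 1+|I|/\delta$ such windows together using the group law and $L^2$-unitarity. Your homogeneous argument (summing $q$-th powers over the $J_k$) is exactly what the paper writes out, and the paper disposes of the inhomogeneous estimate with ``a similar argument,'' deferring to \cite{Car2}. So there is no difference in strategy; the only issue is in the step you yourself single out as the main difficulty.

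There the bookkeeping does not close as cleanly as you assert. Following your own decomposition, the off-diagonal contribution on $J_k$ is $U(t-t_k)\psi_k$ with $\psi_k=\sum_{j<k}\psi_{jk}$ and $\|\psi_{jk}\|_{L^2}\le C\|F\|_{L^{q_2'}(J_j;L^{r_2'})}$ by the dual homogeneous estimate and unitarity; H\"older in $j$ then gives $\|\psi_k\|_{L^2}\le C\,N^{1/q_2}\|F\|_{L^{q_2'}(I;L^{r_2'})}$, and summing $\|U(\cdot-t_k)\psi_k\|_{L^{q_1}(J_k;L^{r_1})}^{q_1}$ over $k$ costs a further $N^{1/q_1}$. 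The outlined computation therefore yields $N^{1/q_1+1/q_2}$, not the single factor $N^{1/q_1}$ claimed in the lemma (the diagonal terms are harmless since $q_1\ge 2\ge q_2'$). This discrepancy is immaterial for the paper --- the fixed-point argument only needs a constant that is finite for each finite $|I|$, and any polynomial power of $|I|$ would serve --- but if you want the exponent exactly as stated you need an extra ingredient: for instance, prove the case $(q_2',r_2')=(1,2)$ by Minkowski's inequality in $s$ together with the homogeneous bound on all of $I$ (which does give precisely $|I|^{1/q_1}$) and interpolate, or pass from the untruncated to the retarded operator via Christ--Kiselev. As written, the claim that the triangular sum produces only $N^{1/q_1}$ is asserted rather than proved.
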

\begin{proof} This result can be essentially be found in \cite[Propositon 3.3]{Car2}, but for the convenience of the reader we shall 
recall the basic idea in the case of an isotropic confinement, where the Hamiltonian is imply given by 
$$H= \frac{1}{2} \Delta + \frac{\omega^2}{2} {|x|^2},$$ 
with $\omega \in \R$. In this case, Mehler's formula yields local in-time dispersion on $|t| < \pi/(2\omega)$. 
Now, let $I$ be a given finite time-interval. Then we can split $I$ into (finitely many) sub-intervals $ I_j$, $j=1, \dots , N$ such that $|I_j |< \pi/(2\omega)$. 
Strichartz estimates (based on Mehler's formula) imply that on each $I_j$ it holds
$$ \| U(t) \varphi \|_{L^q(I_j; L^r)} \le C \| \varphi \|_{L^2},$$
for a universal $C=C(r,d)>0$.
From this we obtain
$$
\| U(t) \varphi \|^q_{L^q(I; L^r)} = \sum_{j = 1}^N\| U(t) \varphi \|^q_{L^q(I_j; L^r)}  \lesssim |I|\,  \| \varphi \|^q_{L^2}.
$$ 
This directly yields the first assertion of the lemma for any finite time-interval $I$ in the case of an isotropic potential. A similar argument can then be done in order to prove the second assertion stated above and a 
generalization to the case of a non-isotropic confinement is straightforward.
\end{proof} 

Taking $\varphi$ to be an eigenfunction of the (anisotropic) harmonic oscillator shows, that in general one can not expect the above given 
Strichartz-estimates to hold uniformly in-time, i.e. without dependence on the length of $I$, unless all the $\omega_j$ in \eqref{eq:pot} are in fact zero, 
in which case the operator $U(t)$ simplifies to the usual free Schr\"odinger group. In the upcoming section, this requires us to 
keep track of the dependence of all appearing constants on the length of $I$.

\subsection{Proof of Theorem \ref{th:critical}}

Having in mind the a-priori bounds 
obtained in Section \ref{sec:a-priori}, we can now state the proof of our main result.

\begin{proof}
We rewrite \eqref{eq:quintic_NLS} using Duhamel's formula
\begin{equation}\label{eq:duhamel}
u(t)=U(t) u_0 +i\l \int_0^tU(t-s)\(|u|^2 u\)(s) \, \D s
-\,  \sigma \int_0^tU(t-s)\(|u|^4 u\) (s) \, \D s .
\end{equation}
We aim to prove global well-posedness by a fixed point argument.
To this end, we first consider a space-time slab $I \times \R^3$, where $|I | < + \infty$, such that the $L^{10}_{t, x}$ norm of $u$ 
within this slab is small, say
\begin{equation}\label{eps}
\|u \|_{L^{10}_{t, x}(I\times\R^3)}\le \eps < 1.
\end{equation}
Then, using Strichartz and H\"older estimates, we can estimate \eqref{eq:duhamel} as follows:
\begin{align*}
\| u \|_{L^q_tL^r_x(I\times\R^3)}\lesssim & \, |I|^{1/q}\pare{\| u_0\|_{L^2}+\| |u |^2 u \|_{L^1_tL^2_x}
+\| | u |^4 u \|_{L^2_tL^{6/5}_x}}\\
\lesssim & \, |I|^{1/q}\pare{\| u_0\|_{L^2}+|I|^{1/2}\| u \|_{L^{10}_{t, x}}^2\| u \|_{L^{10/3}_{t, x}}
+\|u\|_{L^{10}_{t, x}}^4\|u\|_{L^{10}_tL^{30/13}_x}}
\end{align*}
and taking into account the smallness assumption stated above we get
\begin{align*}
\| u \|_{L^q_tL^r_x(I\times\R^3)}\lesssim \, |I|^{1/q}\pare{\| u _0\|_{L^2}+|I|^{1/2}\eps^2\| u \|_{S^0}+\eps^4\| u \|_{S^0}}.
\end{align*}
Next, in order to bound $\nabla u$ we first note that 
\begin{equation*}
[\partial_j,H]=  \partial_j V(x) ,\quad [x_j,H]
= \partial_j, \quad j=1, \dots, d.
\end{equation*}
where $[A,B]=AB-BA$ denotes the usual commutator. By assumption, $\partial_j V(x) = \omega_j^2 x_j$, i.e. linear in $x$. This shows that 
we can obtain a closed family of estimates for $\nabla u$ and $x u$. 
More precisely, we have 
\begin{align*}
\|\nabla u\|_{L^q_tL^r_x}+\|xu\|_{L^q_tL^r_x}
\lesssim &\, |I|^{1/q}\Big(\|\nabla u_0\|_{L^2}+\|xu_0\|_{L^2}+\||u|^2\nabla u\|_{L^1_tL^2_x}
+\|x|u|^2u\|_{L^1_tL^2_x}\\
& \, +\||u|^4\nabla u\|_{L^2_tL^{6/5}_x}+\|x|u|^4u\|_{L^2_tL^{6/5}_x}\Big)\\
\lesssim& \,  |I|^{1/q}\Big(\|\nabla u_0\|_{S^0}+\|xu_0\|_{S^0}+|I|^{1/2}\eps^2(\|\nabla u\|_{S^0}+\|xu\|_{S^0})\\
&+\eps^4(\|\nabla u\|_{S^0}+\|xu\|_{S^0})\Big).
\end{align*}
Thus, denoting the Strichartz norm in $\Sigma$ by
$$
\| u \|_{S_\Sigma}:=\| u \|_{S^0}+\|\nabla u \|_{S^0} + \| x u \|_{S^0},
$$
we infer 
\begin{equation*}\label{eq:103}
\| u \|_{S_\Sigma}\le \sup_{q} |I|^{1/q} \left( \|u_0\|_{\Sigma} + \eps^2 |I|^{1/2} \|u \|_{S_\Sigma} + \eps^4 \| u \|_{S_\Sigma} \right) .
\end{equation*}
Thus, if $\eps <1 $ defined in \eqref{eps}, is sufficiently small, a standard contraction argument yields
\begin{equation*}\label{eq:103}
\| u \|_{S_\Sigma}\le C(\|u_0\|_{\Sigma}, |I|).
\end{equation*}

Next, consider any finite time interval $I=[0,T]$, for some $T<+ \infty$. From Proposition \ref{prop:a-priori} 
we already know that the $L^{10}_{t, x}$ of $u$ is uniformly bounded but not necessarily small, say
$
\| u \|_{L^{10}_{t, x}(I\times\R^3)}\le M$, where $M$ is independent of the length of $I$.
Then we can divide $I$ into subintervals $I=I_1\cup\dotsc\cup I_N$, where $| I_\ell |  = C(\delta, \| u \| _{\Sigma})>0$, and such that the $L^{10}_{t,x}$ norm is sufficiently small in each $I_\ell =[t_{\ell-1}, t_\ell]$, i.e.
\begin{equation*}
\|u\|_{L^{10}_{t, x}(I_\ell \times\R^3)}\le \eps, \quad \textrm{for all $\ell =1, \dotsc, N$.}
\end{equation*}
Note that for any $I=[0,T]$ this $\eps <1$ only depends on $\|u_0 \|_{\Sigma}$ and not on the length of the interval $I$, in view of Proposition \ref{prop:a-priori}. 

By the same fixed point argument as before, we consequently obtain that in each $I_\ell \times \R^3$ it holds
\begin{equation*}
\|u\|_{S_\Sigma(I_\ell \times \R^3)}\le C(\|u(t_{\ell -1},\cdot) \|_{\Sigma}, |I_\ell|),\quad \ell =1, \dotsc, N.
\end{equation*}
By Corollary \ref{cor:a-priori} we also have that $\| u(t, \cdot)\|_{\Sigma}$ is uniformly bounded for all $t \ge 0$ and thus
\begin{equation*}
\|u\|_{S_\Sigma(I_\ell \times\R^3)}\le C(\|u_0\|_{\Sigma}, |I_\ell |).
\end{equation*}
Summing up all the subintervals $I_\ell$ we consequently infer
\begin{equation*}
\|u\|_{S_\Sigma(I\times\R^3)}\le C(\|u_0\|_{\Sigma}, M).
\end{equation*}
By continuity, we consequently obtain a unique solution $u$ in $[0,T]\times \R^3$, for any $T\in (0,\infty)$ and thus 
we conclude that the Cauchy problem \eqref{eq:quintic_NLS} is globally well-posed in $\Sigma$ (depending continuously on the initial data). 
\end{proof}

As a by-product of our analysis we infer that all Strichartz norms are uniformly bounded during the time evolution, at least in situations without harmonic confinement. 
\begin{proposition}
Let $d=3$, $p=5$ and assume $V(x)\equiv 0$. Then for any admissible pair of exponents $(q,r)$ it holds:
\begin{equation*}
\| u \|_{L^q([0, \infty);L^r(\R^3))}+\|\nabla u \|_{L^q([0, \infty);L^r(\R^3))}
\le C(\|u_0\|_{L^2}, E_\kappa(0)).
\end{equation*}
\end{proposition}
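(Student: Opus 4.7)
The plan is to adapt the argument from the proof of Theorem \ref{th:critical}, exploiting two facts specific to the free case $V\equiv 0$: the Strichartz estimates of Lemma \ref{thm:strich} hold \emph{without} the $|I|^{1/q}$ prefactor (the operator $U(t)$ is now the free Schr\"odinger group, for which dispersion holds on all of $\R$), and the energy bound of Corollary \ref{cor:a-priori} reduces to the uniform control $\|u(t)\|_{H^1}\le C_1(\|u_0\|_{H^1},E_\kappa(0))$ (the $\|xu\|_{L^2}$ part is simply dropped while the interpolation step goes through verbatim). Combined with Proposition \ref{prop:a-priori}, these give the a priori inputs $\|u\|_{L^{10}_{t,x}([0,\infty)\times\R^3)}\le M$, $\|u(t)\|_{L^2}\le\|u_0\|_{L^2}$ and $\|u(t)\|_{H^1}\le C_1$, with $M,C_1$ depending only on $\|u_0\|_{L^2}$ and $E_\kappa(0)$.

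First I would partition $[0,\infty)=\bigcup_{\ell=1}^N I_\ell$, $I_\ell=[t_{\ell-1},t_\ell]$, into finitely many subintervals, with $N\lesssim(M/\eps)^{10}$, on which $\|u\|_{L^{10}_{t,x}(I_\ell\times\R^3)}\le\eps$, where $\eps=\eps(\|u_0\|_{L^2},E_\kappa(0))>0$ is a small parameter to be fixed at the end. On each $I_\ell$, Duhamel's formula and the (now global) Strichartz bounds yield, for any admissible $(q,r)$,
\begin{equation*}
\|u\|_{S^0(I_\ell)}\le C\|u(t_{\ell-1})\|_{L^2}+C\bigl\||u|^2u\bigr\|_{L^{10/7}_{t,x}(I_\ell)}+C\bigl\||u|^4u\bigr\|_{L^2_tL^{6/5}_x(I_\ell)}.
\end{equation*}
The quintic term is handled exactly as in Theorem \ref{th:critical}, producing a factor $\eps^4\|u\|_{S^0(I_\ell)}$. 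The key new ingredient is to avoid the $|I|^{1/2}$ factor used there for the cubic nonlinearity: the Hölder dual of the admissible pair $(10/3,10/3)$ is $(10/7,10/7)$, and the elementary interpolation $\|u\|_{L^{30/7}_{t,x}}\le\|u\|_{L^{10}_{t,x}}^{1/3}\|u\|_{L^{10/3}_{t,x}}^{2/3}$ yields
\begin{equation*}
\bigl\||u|^2u\bigr\|_{L^{10/7}_{t,x}(I_\ell)}=\|u\|_{L^{30/7}_{t,x}(I_\ell)}^3\le\|u\|_{L^{10}_{t,x}(I_\ell)}\,\|u\|_{L^{10/3}_{t,x}(I_\ell)}^2\le\eps\|u\|_{S^0(I_\ell)}^2,
\end{equation*}
with no time-Hölder cost. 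Combining these estimates produces the quadratic inequality $X\le A+C\eps X^2+C\eps^4X$, where $X=\|u\|_{S^0(I_\ell)}$ and $A=C\|u_0\|_{L^2}$, which, for $\eps$ small enough depending only on $\|u_0\|_{L^2}$, closes via a standard continuity/bootstrap argument to give $\|u\|_{S^0(I_\ell)}\le 2A$ uniformly in $\ell$.

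For the gradient, I would differentiate \eqref{eq:quintic_NLS} and run the same scheme, using $|\nabla(|u|^{p-1}u)|\lesssim|u|^{p-1}|\nabla u|$ and the analogous Hölder splits
\begin{equation*}
\bigl\||u|^2\nabla u\bigr\|_{L^{10/7}_{t,x}}\le\eps\,\|u\|_{S^0}\,\|\nabla u\|_{S^0},\qquad\bigl\||u|^4\nabla u\bigr\|_{L^2_tL^{6/5}_x}\le\eps^4\|\nabla u\|_{S^0}.
\end{equation*}
Inserting the bound on $\|u\|_{S^0(I_\ell)}$ just obtained, the resulting inequality for $\|\nabla u\|_{S^0(I_\ell)}$ is now \emph{linear} and absorbs for $\eps$ suitably small, giving $\|\nabla u\|_{S^0(I_\ell)}\le 2C\|\nabla u(t_{\ell-1})\|_{L^2}\le 2CC_1$. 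Summing the contributions over the $N$ subintervals,
\begin{equation*}
\|u\|_{L^q_tL^r_x([0,\infty))}^q=\sum_{\ell=1}^N\|u\|_{L^q_tL^r_x(I_\ell)}^q\le N\,(2C\|u_0\|_{L^2})^q,
\end{equation*}
and similarly for $\nabla u$, yields the claim with a constant $C=C(\|u_0\|_{L^2},E_\kappa(0))$.

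The main obstacle is precisely the cubic term: the Hölder step used in the proof of Theorem \ref{th:critical} introduces the factor $|I|^{1/2}$ which defeats any attempt to iterate the local scheme on the entire half-line, no matter how small $\eps$ is. Sidestepping this via the scale-invariant choice of dual pair $(10/7,10/7)$, which distributes the $L^{10}_{t,x}$-smallness multiplicatively without any time-Hölder cost, is the only essentially new technical input; once this is in place the argument is just the one of Theorem \ref{th:critical} applied on each piece of the partition.
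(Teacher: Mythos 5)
Your proposal is correct and follows essentially the same route as the paper's proof: partition $[0,\infty)$ into finitely many intervals of small $L^{10}_{t,x}$ norm (possible by Proposition \ref{prop:a-priori}), estimate the cubic term with the scale-invariant Hölder split dual to the admissible pair $(10/3,10/3)$ so that no $|I|^{1/2}$ factor appears, close the Strichartz bound on each piece by a bootstrap, and sum over the $N$ pieces using the uniform $H^1$ bound at the endpoints. The only (immaterial) differences are that you close the $S^0$ estimate first and then linearize the gradient estimate, whereas the paper runs a single $S^1$ bootstrap, and you use the dual pair $(2,6/5)$ rather than $(10/7,10/7)$ for the quintic term.
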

\begin{proof}
Let $(q, r)$ be an arbitrary admissible pair of exponents, $I$ an arbitrary time interval (which could also be infinite), and let $t^\ast\in I$. 
Then, by Strichartz estimates we have
\begin{multline*}
\|\nabla u \|_{L^q_tL^r_x(I\times\R^3)}\lesssim\|\nabla u (t^\ast)\|_{L^2(\R^3)}
+\||u |^2\nabla u \|_{L^{10/7}_{t, x}(I\times\R^3)}
+\||u |^4\nabla u \|_{L^{10/7}_{t, x}(I\times\R^3)},
\end{multline*}
where we recall that $(10/3, 10/3)$ is an admissible pair of exponents. By Hölder's inequality we obtain
\begin{align*}\label{eq:strich1}
\|\nabla u \|_{L^q_tL^r_x(I\times\R^3)}\lesssim \|\nabla u (t^\ast)\|_{L^2(\R^3)}
+\|u\|_{L^{10/3}_{t, x}}\|u \|_{L^{10}_{t, x}}\|\nabla u \|_{L^{10/3}_{t, x}} +\|u \|_{L^{10}_{t, x}}^4\|\nabla u \|_{L^{10/3}_{t, x}}.
\end{align*}
Analogously, we obtain
\begin{equation*}\label{eq:strich2}
\|u \|_{L^q_tL^r_x(I\times\R^3)}\lesssim\|u(t^\ast)\|_{L^2(\R^3)}
+\|u\|_{L^{10}_{t, x}}\|u\|_{L^{10/3}_{t, x}}^2
+\|u\|_{L^{10}_{t, x}}^4\|u\|_{L^{10/3}_{t, x}}.
\end{equation*}
Adding the last two inequalities and taking the supremum over all admissible pairs $(q, r)$ we infer 
\begin{equation*}\label{eq:strich_tot}
\|u\|_{S^1(I\times\R^3)}\lesssim\|\nabla u(t^\ast)\|_{L^2(\R^3)}
+\|u\|_{L^{10}_{t, x}}\|\psi\|_{S^1}^2\\
+\|u\|_{L^{10}_{t, x}}^4\|u\|_{S^1}.
\end{equation*}
Since we already know that $\|u\|_{L^{10}_{t, x}} \le C(\|u_0\|_{L^2}, E_\kappa(0))$, we can divide the time-interval $[0, T]$ for any $T>0$ into subintervals $I_j$, $j=1, \dotsc, N$, such that
\begin{equation*}
\|u\|_{L^{10}_{t, x}(I_j\times\R^3)}\le\eps,\qquad\forall\;j=1,\dotsc, N.
\end{equation*}
Note that $N$ only depends on $\eps>0$ and on the constant $C(\|u_0\|_{L^2}), E_\kappa(0))$. Hence, for $\eps$ sufficiently small a standard bootstrap argument yields 
\begin{equation*}
\|u\|_{S^1(I_j\times\R^3)}\le C(\|u_0\|_{L^2}, E_\kappa(0))\|\nabla u(t_j)\|_{L^2(\R^3)},
\end{equation*}
where $t_j\in I_j$. Since we also know that $\|\nabla u(t)\|_{L^2}\le C(\|u_0\|_{L^2}, E_\kappa(0))$ for each $t>0$, we conclude
\begin{equation*}
\|u\|_{S^1(I_j\times\R^3)}\le C(\|u_0\|_{L^2}, E_\kappa(0)),\qquad\forall\;j=1,\dotsc,N.
\end{equation*}
Thus, by summing over the $N$ subintervals we obtain the desired result.
\end{proof}
\section{Proofs for subcritical damping and the time-decay of solutions}\label{sec:sub}

This Section is devoted to the remaining proofs for Corollaries \ref{cor:sub} - \ref{cor:decay}.
To this end, we first note that in the case of an energy-subcritical damping term, one easily concludes 
local in-time well-posedness in $\Sigma$ by
classical arguments, see \cite{Caz, T, Car1, Car2, Zh}. Indeed we have the following blow-up alternative:

\begin{lemma} \label{lem:blow-up} Let $V$ be a quadratic confinement of the form \eqref{eq:pot} and $u_0 \in \Sigma$.
Moreover, let $p\ge 3$, if $d=1,2$ and $3\le p< 5$, if $d=3$. Then there exists a unique local in-time solution
$u \in C([0, T), \Sigma)$. Moreover, if $T< + \infty$, then 
$$\lim_{t \to T} \| u (t, \cdot) \|_{\Sigma} = \infty,$$
\end{lemma}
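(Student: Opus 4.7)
The plan is to establish this via a standard contraction mapping argument based on Duhamel's formula
\begin{equation*}
u(t)=U(t)u_0-i\lambda\int_0^t U(t-s)(|u|^2u)(s)\,ds-\sigma\int_0^t U(t-s)(|u|^{p-1}u)(s)\,ds,
\end{equation*}
where $U(t)=e^{-itH}$ is the propagator studied in Lemma~\ref{thm:strich}. Since the damping index is energy-subcritical ($p<5$ if $d=3$, and $p\ge 3$ arbitrary if $d\le 2$), both nonlinearities are locally Lipschitz from $H^1$ to appropriate dual Strichartz spaces, so the obstructions encountered in the critical Theorem~\ref{th:critical} do not arise.

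The key observation needed to work in $\Sigma$ is the commutator structure already exploited in the proof of Theorem~\ref{th:critical}: since $V$ is quadratic, $[\partial_j,H]=\omega_j^2 x_j$ and $[x_j,H]=\partial_j$, so applying $\partial_j$ or multiplication by $x_j$ to the equation yields a closed system whose right-hand side involves $\nabla u$, $xu$, and $u$ itself, all of which are controlled by $\|u\|_\Sigma$. First I would define the Strichartz ball
\begin{equation*}
X_{T,R}:=\bigl\{u\in C([0,T];\Sigma)\,:\,\|u\|_{S_\Sigma([0,T]\times\R^d)}\le R\bigr\},
\end{equation*}
equipped with a suitable $S^0$-type metric, choose $R=2C\|u_0\|_\Sigma$, and show that the Duhamel map is a contraction on $X_{T,R}$ for $T$ small enough depending only on $\|u_0\|_\Sigma$.

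The nonlinear estimates are obtained by combining Lemma~\ref{thm:strich} with Hölder's inequality and Sobolev embedding. For the cubic term one uses admissible pairs adapted to $d$ (e.g.\ the pair $(8,4)$ in $d=2$ or $(q,r)$ with $\frac{2}{q}+\frac{d}{r}=\frac{d}{2}$ chosen so that $W^{1,r}\hookrightarrow L^6$), and the bound involves a positive power of $|I|$ coming from the factor $|I|^{1/q}$ in the Strichartz inequality. For the damping nonlinearity $|u|^{p-1}u$ with $3\le p<1+\frac{4}{d-2}$ (or $p\ge 3$ in $d\le 2$), the subcriticality guarantees the existence of admissible exponents producing a similar favourable factor $|I|^\theta$ with $\theta>0$. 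Differentiating or multiplying by $x$, and using $[\partial_j,H]=\omega_j^2 x_j$, adds only linear-in-$x$ terms that are absorbed into $\|xu\|_{L^q_tL^r_x}$ via Strichartz. Together, these estimates yield a bound of the shape $\|\Phi u\|_{S_\Sigma}\le C\|u_0\|_\Sigma+C T^\theta R^{\max(3,p)}$, and a similar one for differences, so a contraction is obtained by taking $T$ sufficiently small.

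The blow-up alternative then follows by the standard maximal-existence argument: let $T^\ast$ be the supremum of times of existence; if $T^\ast<\infty$ but $\limsup_{t\to T^\ast}\|u(t)\|_\Sigma<\infty$, the local existence time in the contraction step depends only on $\|u(t_0)\|_\Sigma$, so one could restart from some $t_0$ close to $T^\ast$ and extend the solution beyond $T^\ast$, contradicting maximality. The main technical obstacle I anticipate is bookkeeping the $\|xu\|_{L^q_tL^r_x}$ piece, since multiplication by $x$ does not commute with $H$; however this is handled by the closed commutator system above, which is precisely the reason the Strichartz norm $\|\cdot\|_{S_\Sigma}$ is the natural one to use.
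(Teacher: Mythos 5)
Your proposal is correct and coincides with the argument the paper intends: the paper gives no proof of this lemma, deferring to the classical local well-posedness theory of \cite{Caz, T, Car1, Car2, Zh}, and your contraction on a ball in $S_\Sigma$ using the local-in-time Strichartz estimates of Lemma \ref{thm:strich} together with the closed commutator system for $\nabla u$ and $xu$ is exactly that classical argument (and the same machinery the paper itself deploys in the proof of Theorem \ref{th:critical}). The blow-up alternative in the full $\Sigma$ norm follows from the maximality argument as you describe, consistent with the discussion in Remark \ref{rem: blow-up}.
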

In order to continue this local-in-time solution for all times we again need to derive suitable a-priori estimates. This will be done in the following lemma.

\begin{lemma}\label{lem:sub}
Let $V$ be a quadratic confinement of the form \eqref{eq:pot} and $u(t) \in \Sigma$ be a solution to \eqref{eq:diss_NLS}. 
Then, if either:
\begin{itemize} 
\item 
$p> 3$, for $d=1,2$, respectively $3<p\le 5$, for $d=3$,
\item or $p=3$ and $\sigma \ge \text{\rm max}\, \{0, -\lambda\}$,
\end{itemize}it holds
\begin{equation*}
\| u(t, \cdot) \|_{\Sigma} \le C(\| u_0 \|_{\Sigma}),\quad \forall \, t \ge 0.
\end{equation*}
\end{lemma}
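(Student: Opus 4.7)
The strategy is to revisit the proof of Proposition \ref{prop:a-priori} with a perturbed energy
$$E_\kappa(t):=\int_{\R^d}\frac{1}{2}|\nabla u|^2+V(x)|u|^2+\frac{\lambda}{2}|u|^4+\kappa|u|^{p+1}\,\de x,$$
for an appropriately chosen $\kappa>0$, and to derive a differential inequality for $E_\kappa$ that can be integrated in time. Repeating the formal manipulations of the proof of Proposition \ref{prop:a-priori} (using the dissipation identity \eqref{eq:diss_rho}, the decomposition $\rho|\nabla u|^2=\tfrac14|\nabla\rho|^2+|J|^2$, and one integration by parts in the cross term $\int\rho^{(p-3)/2}\nabla\rho\cdot J\,\de x$), I expect the right-hand side of $\frac{d}{dt}E_\kappa$ to be the sum of the explicitly non-positive contributions
$$-\sigma\tfrac{p-1}{4}\!\int\!\rho^{(p-3)/2}|\nabla\rho|^2,\;\; -\pare{\sigma-\tfrac{\kappa(p+1)(p-1)}{4}}\!\int\!\rho^{(p-1)/2}|\nabla u|^2,\;\; -2\sigma\!\int\!V\rho^{(p+1)/2},$$
$$-\kappa(p+1)\sigma\!\int\!\rho^p,\;\; -\kappa\tfrac{(p+1)(p-1)}{4}\!\int\!\rho^{(p-3)/2}\modu{\tfrac12\nabla\rho-J}^2,$$
plus the only potentially positive term $-2\sigma\lambda\int\rho^{(p+3)/2}$, which is present only when $\lambda<0$. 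A first requirement is to pick $\kappa$ so that $\kappa(p+1)(p-1)<4\sigma$, so as to keep the second term non-positive.

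For the energy-subcritical range $p>3$ the inequality $(p+3)/2<p$ is strict, so Young's inequality with exponent $\theta=2/(p-1)\in(0,1)$ yields
$$\rho^{(p+3)/2}\le\eps\,\rho^p+C_\eps\,\rho^{(p+1)/2},\qquad\forall\,\eps>0.$$
Choosing $\eps$ small enough that $2\sigma|\lambda|\eps<\kappa(p+1)\sigma$ absorbs the $\rho^p$ part into the corresponding good term, and what remains on the right-hand side is a non-negative multiple of $\int\rho^{(p+1)/2}\,\de x$. The obvious generalisation of Lemma \ref{lem:a-priori}, obtained by integrating \eqref{eq:diss_rho} in space and time, supplies the bound $\int_0^\infty\!\int_{\R^d}\rho^{(p+1)/2}\,\de x\,\de t\le C(\|u_0\|_{L^2})$, so integrating the differential inequality in $t$ produces $E_\kappa(t)\le E_\kappa(0)+C(\|u_0\|_{L^2})$ for every $t\ge0$.

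The borderline cubic case $p=3$ must be treated separately because $\rho^{(p+3)/2}$ and $\rho^p$ both equal $\rho^3$, so the two competing contributions combine into a single term $-2\sigma(\lambda+2\kappa)\int\rho^3$. The choice $\kappa=|\lambda|/2$ annihilates it exactly, and the compatibility condition $2\kappa=\kappa(p+1)(p-1)/4\le\sigma$ then reduces to $\sigma\ge|\lambda|$, which is precisely the hypothesis $\sigma\ge\max\{0,-\lambda\}$ (equalities being admissible since only non-positivity is needed). Hence $E_\kappa(t)\le E_\kappa(0)$ uniformly in this subcase, without invoking the $L^1_tL^{(p+1)/2}_x$ bound. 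Once the uniform control on $E_\kappa(t)$ is in hand, I translate it into the $\Sigma$-bound exactly as in the proof of Corollary \ref{cor:a-priori}: if $\lambda\ge0$ the conclusion is immediate from non-negativity; if $\lambda<0$ I absorb $-\tfrac{|\lambda|}{2}\|u\|_{L^4}^4$ by interpolating $\|u\|_{L^4}$ between $\|u\|_{L^2}$ and $\|u\|_{L^{p+1}}$ (trivial when $p=3$, and an application of Hölder/Young when $p>3$), and finally use $V(x)\gtrsim|x|^2$ to control $\|xu\|_{L^2}$. The main obstacle I anticipate is the simultaneous saturation of the two constraints $\sigma\ge2\kappa$ and $\kappa\ge|\lambda|/2$ at the cubic threshold $\sigma=|\lambda|$: one must verify that all the non-positivity inequalities remain valid even when every slack collapses.
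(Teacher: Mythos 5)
Your proposal is correct and follows essentially the same route as the paper: the perturbed functional $E_{\kappa,p}$ with the cross-term decomposition and the interpolation of $\int\rho^{(p+3)/2}$ between $\int\rho^{(p+1)/2}$ and $\int\rho^{p}$, closed by the space-time bound on $\int\rho^{(p+1)/2}$ from the dissipation identity. Your treatment of the borderline case $p=3$, $\lambda<0$ via the choice $\kappa=|\lambda|/2$ is only superficially different: with that choice the quartic terms cancel and $E_{\kappa}$ coincides identically with the paper's linear energy functional $E_{\rm lin}$, yielding the same condition $\sigma\ge|\lambda|$.
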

Combining this uniform bound on the energy norm of $u$ with the assertion of Lemma \ref{lem:blow-up}, consequently proves Corollary \ref{cor:sub} and \ref{cor:cubic}.
\begin{proof} 
We first consider the case where $p>3$ and consider the following energy-type functional
\begin{equation*}\label{eq:diss_NLS_enP}
E_{\kappa, p}(t):=\int_{\R^d}\frac{1}{2}\, |\nabla u(t,x)|^2+ V(x) |u(t,x)|^2+ \frac{\lambda}{2}\, |u(t,x)|^4+\kappa|\psi(t,x)|^{p+1} d x,
\end{equation*}
with $\kappa >0$ to be chosen later on. The time-derivative of  $E_{\kappa, p}(t)$ is then found to be
\begin{align*}
\frac{d}{d t} \, E_{\kappa, p}(t)=
&-\sigma(p-1)\int_{\R^d}|u|^{p-1}|\nabla\sqrt{\rho}|^2\, d x-\sigma\int_{\R^d}|u|^{p-1}|\nabla u|^2\, d x\\
&+\frac{\kappa}{4}(p+1)(p-1)\int_{\R^d}|u|^{p-3}\nabla\rho\cdot J\, d x
-2\sigma\int_{\R^d} V|u|^{p+1}\,dx\\
& -\sigma \kappa(p+1)\int_{\R^d}|u|^{2p}\, d x -2\lambda\sigma\int_{\R^d}|u|^{p+3}\, d x.
\end{align*}
We rewrite, similarly as before,
\begin{equation*}
\int_{\R^d}|u|^{p-3}\nabla\rho\cdot J\, d x
=-\int_{\R^d}|u|^{p-3}\modu{\mez\nabla\rho-J}^2\, d x
+\int_{\R^d}|u|^{p-1}|\nabla u|^2\, d x,
\end{equation*}
and also use the interpolation estimate
\begin{equation*}
\|u \|_{L^{p+3}}^{p+3}\le\|u\|_{L^{p+1}}^{\frac{(p-3)(p+1)}{p-1}} \,
\|u \|_{L^{2p}}^{\frac{4p}{p-1}}.
\end{equation*}
Using this, and following the arguments given in the proof of Proposition \ref{prop:a-priori}, we can 
always find a $\kappa = \kappa(d, p)>0$ such that 
\begin{equation*}
E_{\kappa, p}(t)\le E_{\kappa, p}(0)+C_p (\|u_0\|_{L^2}), \quad \forall \, t\ge 0.
\end{equation*}
This consequently implies a uniform bound on $\| u(t, \cdot) \|_{\Sigma}$ by a interpolation arguments similar to those given in the proof of Corollary \ref{cor:a-priori}.

In a second step, we turn to the threshold situation $p=3$: In the defocusing case $\lambda >0$ one can 
use the energy-type functional specified above with $\kappa = 0$ and $p=3$. One analogously proves that $E_{0,3}(t)$  
is decreasing along solution $u(t)$ and consequently concludes that $\| u(t, \cdot) \|_{\Sigma}$ is uniformly bounded. 

In the focusing case $\lambda <0 $ we have to argue slightly differently. 
Recall that in this case, the NLS type equation can be written as
\begin{equation}\label{eq:diss_cubic1}
i\d_t u =- \mez\Delta u +V(x) u -(|\lambda|+i\sigma)|u |^2 u .
\end{equation}
We consequently consider the corresponding linear energy functional 
\begin{equation*}\label{eq:diss_NLS_lin}
E_{\rm lin}(t):=\int_{\R^d}\frac{1}{2}\, |\nabla u(t,x)|^2+ V(x) |u(t,x)|^2d x.
\end{equation*}
Differentiating $E_{\rm lin}(t)$ w.r.t time yields
\begin{align*}
\frac{d}{d t} \, E_{\rm lin}(t)=-\int_{\R^d}\re(\Delta\conj{u } (t,x) \d_t u (t,x))\, d x - 2 \sigma \int_{\R^d} V(x) |u|^4 \, dx
\end{align*}
and using equation \eqref{eq:diss_cubic1} we obtain
\begin{align*}
\frac{d}{d t} \,  E_{\rm lin}(t)
= & \ \sigma\int_{\R^d}| u |^2\re(\conj{ u }\Delta u )\, d x  +|\lambda| \int_{\R^d}| u |^2\im( u \Delta\conj{ u })\, d x - 2 \sigma \int_{\R^d} V(x) |u|^4 \, dx\\
\le & -\frac{\sigma}{2}\int_{\R^d}|\nabla\rho|^2\, d x - \sigma\int_{\R^d}|\conj{u}\nabla u  |^2\, d x
+ 2|\lambda| \int_{\R^d}\re(\conj{u }\nabla u)\cdot\im(\conj{ u }\nabla u )\, d x.
\end{align*}
This can be re-written as
\begin{align*}
\frac{d}{d t} \,  E_{\rm lin}(t)\le & \, -\frac{\sigma}{2}\int_{\R^d}|\nabla\rho|^2\, d x -(\sigma-|\lambda| )\int_{\R^d}\rho|\nabla u |^2\, d x \\
& \, -|\lambda| \int_{\R^d}\modu{\frac{1}{2}\re(\conj{u }\nabla u )- \im(\conj{u }\nabla  u )}^2\, d x,
\end{align*}
and thus, if $\sigma\ge |\lambda|$, we consequently obtain $E_{\rm lin}(t) \le E_{\rm lin}(0) < \infty$. This 
yields global well-posedness of the considered NLS \eqref{eq:diss_cubic1} and we are done.
\end{proof}

\begin{remark} \label{rem: blow-up} Note that the blow-up alternative given in Lemma \ref{lem:blow-up} is not exactly the same as in the case of the usual NLS (without damping), 
for which it is enough to control $ \|\nabla u(t, \cdot)\|_{L^2}$. 
To see this, consider the following focussing NLS (cubic, for simplicity, but a generalization to other power-type nonlinearities is straightforward):
\begin{equation*}
i\d_t u =- \mez\Delta u +V(x) u -  |u |^2 u .
\end{equation*}
It is well known that this equation admits local in-time solutions which in addition preserve the mass $M(t)= \|u(t)\|_{L^2}$ and the energy functional 
$$
E (t)=\int_{\R^d}\frac{1}{2}\, |\nabla u(t,x)|^2+ V(x) |u(t,x)|^2 - \frac{ 1}{2} |u |^4 d x.
$$
Using the Gagliardo-Nirenberg inequality
\begin{equation*}
 \lVert u \rVert_{L^4}^4 \le C  \lVert u \rVert_{L^2}^{4-d} \, \lVert
 \nabla u
 \rVert_{L^2}^d,
\end{equation*}
one observes that, in view of mass conservation, the (conserved) energy is a sum of three terms, two of which are bounded, provided that $ \|\nabla u(t, \cdot)\|_{L^2}< \infty$. 
Thus, also the third term, i.e. the linear potential energy $\propto  \|x u(t, \cdot)\|_{L^2}$, has to be bounded. This shows that unless $\lVert \nabla
u (t, \cdot) \rVert_{L^2}$ becomes unbounded, $\| u (t, \cdot)\|_{\Sigma}$ is a
continuous function in time. In other words, even in the case of a given quadratic confinement, the possible blow-up of solutions can essentially be regarded as a local phenomena in $x$ 
(analogously to the case without potential).

Our situation is a bit more involved, though: As we have seen, finite-time blow-up is only possible for $\lambda <0$ and $p= 3$. 
In this case, however, we are only able to derive boundedness of $E_{\rm lin}(t)$, provided that $\sigma < |\lambda|$. We therefore do not get any further insight on the nature of the blow-up and 
can not rule out the possibility that the $H^1(\R^3)$ norm of the solution stays bounded but $\| x u (t, \cdot)\|_{L^2} \to  \infty$ (even though such a situation seems to be unlikely).
\end{remark}

\begin{remark} A closely related observation concerns the following: If instead of \eqref{eq:pot}, we would consider a \emph{repulsive} potential of the form $V(x) = - \frac{1}{2} |x|^2$,
we would not succeed with our approach. More precisely, in the case of a repulsive potential,
the corresponding term in the time-derivative of $E_{\kappa, p}(t)$ comes with the wrong sign and thus we can not conclude that $E_{\kappa,p}(t)$ is non-increasing.
This is remarkable insofar as it is known that (sufficiently strong) repulsive quadratic potentials are an obstruction for the possible blow-up of solutions to NLS, see \cite{Carles}. 
In our case, the situation is not so clear and global well-posedness for repulsive potentials remains an interesting open problem.
\end{remark}

Having set up global well-posedness of the equation \ref{eq:diss_NLS}, we finally
turn to the proof of the decay of its solutions $u(t,x)$ as $t\to + \infty$.

\begin{proof}[Proof of Corollary \ref{cor:decay}]
Let $u\in C([0, \infty); \Sigma)$ be the global solution to \eqref{eq:diss_NLS}. From the
dissipation equation \eqref{eq:diss_rho} we obtain that
\begin{equation*}
\int_0^\infty\int_{\R^n}|u|^{p+1}\,dx\,dt\le C(\|u_0\|_{L^2}).
\end{equation*}
Consider a sequence of time-steps $(t_n)_{n\in \N}$, tending to infinity and define
\begin{equation*}
u_n(t, x):=u(t+t_n, x).
\end{equation*}
From the space-time bound given above, we consequently infer that, as $n \to \infty$:
$$u_n\to0, \quad  \text{in $L^{p+1}([0, \infty)\times\R^d)$}.
$$
On the other hand, from our global in-time existence theory, we know that $(u_n)_{n \in \N}$ is uniformly bounded in
$ C([0, \infty); H^1(\R^d))$ and that $(\d_tu_n)_{n \in \N}$ is uniformly bounded in $C([0, \infty); H^{-1}(\R^d))$.
Thus by the Aubin-Lions Lemma we conclude that $(u_n)_{n \in \N}$ is relatively compact in $C([0, T]; L^2(\R^d))$, for each $0<T<\infty$.
We consequently have that there exists a subsequence $(u_{n_\ell})$, such that
\begin{equation*}
u_{n_\ell}\to0\qquad\textrm{in}\; C([0, T]; L^2(\R^d)).
\end{equation*}
From this, we consequently obtain \eqref{decay}, since
\begin{equation*}
\|u(t_{n_\ell})\|_{L^2}\le\sup_{0\le t\le T}\|u_{n_\ell}(t)\|_{L^2}\to0.
\end{equation*}
In addition, by using the smoothing properties of the Schr\"odinger group $U(t)$, see \cite{CoSa}, we 
get that $(u_n)_{n \in \N}$ is uniformly bounded in $L^2((0,T); H^{3/2}_{\rm loc}(\R^d))$.
Invoking a compactness argument given in \cite{RaTe} we infer that $(u_n)_{n \in \N}$ is pre-compact in $L^2((0,T); H^{1}_{\rm loc}(\R^d))$, 
which concludes the proof. 
\end{proof}

{\bf Acknowledgement:} The authors want to thank R. Carles for helpful discussions.


\begin{thebibliography}{amsplain}

\bibitem{A}  S. K. Adhikari, \emph{Mean-field description of collapsing and exploding Bose-Einstein condensates}, Phys. Rev. A, {\bf 66} (2002), issue 1, 13611--13619.
\bibitem{AK} I. S. Aranson and L. Kramer, \emph{The world of the complex Ginzburg-Landau equation}, Rev. Mod. Phys. {\bf 74} (2002), 99--143.
\bibitem{AnMa} P. Antonelli and P. Marcati, \emph{On the finite energy weak solutions to a system in quantum fluid dynamics}, Comm. Math. Phys. {\bf 287} (2009), no 2, 657--686.
\bibitem{BJ} W. Bao and D. Jaksch, \emph{An explicit unconditionally stable numerical method for solving damped nonlinear Schr\"odinger equations with a focusing nonlinearity}.
SIAM J. Numer. Anal. {\bf 41} (2003), no. 4, 1406--1426.
\bibitem{BJM} W. Bao, D. Jaksch, and P. Markowich, \emph{Three dimensional simulation of jet formation in collapsing condensates}.
J. Phys. B: At. Mol. Opt. Phys. {\bf 37} (2004), no. 2, 329--343.
\bibitem{Bi} E. A. Biswas, \emph{Optical soliton perturbation with nonlinear damping and saturable amplifiers}.
Math. Comput. Simul. {\bf 56} (2001), issue 6, 521--537.
\bibitem{Car1}  R. Carles, \emph{Remarks on nonlinear Schr\"odinger equations with harmonic potential}. Ann. Henri Poincare {\bf 3} (2002), 757--772.
\bibitem{Car2} R. Carles, \emph{Semi-classical Schrödinger equations with harmonic potential and nonlinear perturbation}. Annales I.H.P., Analyse non lin\'eaire {\bf 20} (2003), no. 3, 501--542.
\bibitem{Car3}  R. Carles, \emph{Nonlinear Schrödinger equations with repulsive harmonic potential and applications}. SIAM J. Math. Anal. {\bf 35} (2003), no. 4, 823--843. 
\bibitem{Car}  R. Carles, \emph{Linear vs. nonlinear effects for nonlinear Schr\"odinger equations with potential}.
Commun. Contemp. Math. {\bf 7} (2005), no. 4, 483--508.
\bibitem{Carles}  R. Carles, \emph{Global existence results for nonlinear Schr\"odinger equations with quadratic potentials}, Discrete Contin. Dyn. Syst. {\bf 13} (2005), no. 2, 385--398.
\bibitem{Caz} T. Cazenave, \emph{Semilinear Schr\"odinger Equations}.
Courant Lecture Notes in Mathematics vol. 10, New York University, Courant Institute of Mathematical Sciences, AMS, 2003.
\bibitem{CP} T. Chen and  N. Pavlovic, \emph{The quintic NLS as the mean field limit of a Boson gas with three-body interactions.}
Preprint {\tt arXiv:0812.2740v1}.
\bibitem{CKSTT} J. Colliander, M. Keel, G. Staffilani, H. Takaoka, and T. Tao, \emph{Global well-posedness and scattering for the energy-critical nonlinear Schr\"odinger equation in $\Bbb R\sp 3$}.
Ann. of Math. (2) {\bf 167} (2008), no. 3, 767--865.
\bibitem{CoSa} P. Constantin and J. C. Saut, \emph{Local Smoothing Properties of Dispersive Equations}. 
J. Amer. Math. Soc. {\bf 1} (1988), 413-?439. 
\bibitem{Fi} G. Fibich, \emph{Self focusing in the damped nonlinear Schr\"odinger equation}.
SIAM J. Appl. Math. {\bf 61} (2001), 1680--1705.
\bibitem{Fu} D. Fujiwara, \emph{A construction of the fundamental solution for the Schr\"odinger equation}. J. Analyse Math {\bf 35} (1979), 4--96
\bibitem{GaMa} I. Gasser, P. A. Markowich, \emph{Quantum hydrodynamics, Wigner transforms and the classical limit}. Asympt. Anal. {\bf 14} (1997), 97--116.
\bibitem{Go} O. Goubet, \emph{Asymptotic smoothing effect for a weakly damped nonlinear Schr\"odinger equation in $T\sp 2$}.
J. Diff. Equ. {\bf 165} (2000), no. 1, 96--122.
\bibitem{JuPi} A. J\"ungel and R. Pinnau, \emph{Inviscid limits of the complex Ginzburg-Landau equation}. Comm. Math. Phys. {\bf 214} (2000) 201--226. 
\bibitem{KMS} Y. Kagan, A. E. Muryshev, and G. V. Shlyapnikov, \emph{Collapse and Bose-Einstein Condensation in a Trapped Bose Gas with Negative Scattering Length}.
Phys. Rev. Lett. {\bf 81} (1998), issue 5, 933--937.
\bibitem{KT} M. Keel and T. Tao, \emph{Endpoint Strichartz Estimates}.
Amer. J. Math. \textbf{120} (1998), 955--980.
\bibitem{KiVi} R. Killip and M. Visan, \emph{Energy-critical NLS with quadratic potentials}. Comm. Partial Diff. Equ. to appear.
\bibitem{KiShi} N. Kitaa and A. Shimomura, \emph{Asymptotic behavior of solutions to Schr\"odinger equations with a subcritical dissipative nonlinearity}. J. Diff. Equ. {\bf 242} (2007), issue 1, 192--210.
\bibitem{La} P. Laurencot, \emph{Long-time behaviour for weakly damped driven nonlinear Schr\"odinger equations in $R\sp N$, $N\leq 3$}.
NoDEA Nonlin. Diff. Equ. Appl. {\bf 2} (1995), no. 3, 357--369.
\bibitem{Oh}  Y-G. Oh, \emph{Cauchy problem and Ehrenfest's law of nonlinear Schr\"odinger equations with potentials}. J. Diff. Equ. {\bf 81} (1989), 255--274.
\bibitem{OT} M. Ohta and G. Todorova, \emph{Remarks on global existence and blow-up for damped nonlinear Schr\"odinger equations}.
Discrete Contin. Dyn. Syst. {\bf 23} (2009), no. 4, 1313--1325.
\bibitem{PSS} T. Passot, C. Sulem, and P. L. Sulem, \emph{Linear versus nonlinear dissipation for critical NLS equation}.
Physica D {\bf 203} (2005), issue 3-4, 167--184.
\bibitem{PeSt} N. R. Pereira and L. Stenflo, \emph{Nonlinear Schr\"odinger equation including growth and damping}.
Phys. Fluids {\bf 20} (1977), 1733--1734.
\bibitem{RaTe} J.-M. Rakotoson and R. Temam, \emph{An optimal compactness theorem and application to elliptic-parabolic systems}. 
Appl. Math. Lett. {\bf 14} (2001), no. 3, 303--306.
\bibitem{SU} H. Saito, M. Ueda, \emph{Intermittent implosion and pattern formation of trapped Bose-Einstein condensates with attractive interaction.} Phys. Rev. Lett. {\bf 86} (2001), 1406--14011.
\bibitem{SaMa} H. Sakaguchi and B. A. Malomed, \emph{Two-dimensional dissipative gap solitons}. Preprint {\tt ariXiv:0908.0973v1}.
\bibitem{Shi} A. Shimomura, \emph{Asymptotic Behavior of Solutions for Schr\"odinger Equations with Dissipative Nonlinearities}, Comm. Part. Diff. Equ.  {\bf 31} (2006), issue 9, 1407--1423.
\bibitem {SuSu} C. Sulem and P.~L. Sulem, \emph{The nonlinear Schr\"odinger equation},
Applied Math. Sciences 139, Springer 1999.
\bibitem{T} T. Tao, \emph{Nonlinear Dispersive Equations: Local and Global Analysis}.
CBMS Regional Conference Series in Mathematics, AMS 2006.
\bibitem{TVZ} T. Tao, M. Visan, and X. Zhang, \emph{The Nonlinear Schrödinger Equation with Combined Power-Type Nonlinearities}. Comm. Part. Diff. Equ. {\bf 32}, issue 8, 2007, 1281--1343.
\bibitem{Ts1} M. Tsutsumi, \emph{Nonexistence of global solutions to the Cauchy problem for the damped nonlinear Schr\"odinger equations}.
SIAM J. Math. Anal. {\bf 15} (1984), no. 2, 357--366.
\bibitem{Zh} J. Zhang, \emph{Stability of attractive Bose-Einstein condensates}, J. Statist. Phys. {\bf 101} (2000), no. 3/4, 731--746.
\end{thebibliography}
\end{document}